\def\H_0{\mathcal{H}_0(T)}
\def\X{{\mathcal X}}
\def\ind{{\textrm{ind}}}
\def\asc{{\textrm{asc}}}
\def\iso{{\textrm{iso }}}
\def\ST{\tau_{AB}}
\def\ind{\textrm{ind}}
\def\X{{\cal X}}
\def\Y{{\cal Y}}
\def\H{{\cal H}}
\def\iso{\textrm{iso }}
\def\asc{ \textrm{asc}}
\newtheorem{df}{Definition}[section]
\newtheorem{thm}[df]{Theorem}
\newtheorem{pro}[df]{Proposition}
\newtheorem{rema}[df] {Remark}
\newtheorem{lem}[df] {Lemma}
\def\sfstp{{\hskip-1em}{\bf.}{\hskip1em}}
\def\subject#1{\renewcommand{\thefootnote}{}
\footnote{\it AMS Subject Classification \rm (2010): {#1}}}
\def\keywords#1{\renewcommand{\thefootnote}{}
\footnote{ \it Key words and phrases: {#1}}}
\def\enddemo{\qed \endtrivlist}
\let\csname enddemo*\endcsname=\enddemo
\def\qedsymbol{\ifmmode\bgroup\else$\bgroup\aftergroup$\fi
  \vcenter{\hrule\hbox{\vrule
height.6em\kern.6em\vrule}\hrule}\egroup}
\def\qed{\ifmmode\else\unskip\nobreak\fi\quad\qedsymbol}
\begin{document}
\title
{ \bf Property $(gw)$ for tensor product and\\ left-right multiplication operators\/}

\author {\normalsize ENRICO  BOASSO and B. P. DUGGAL } \vskip 1truecm

\date{ }


\maketitle \thispagestyle{empty} 

\subject{Primary 47A80, 47A53. Secondary 47A10.} \keywords{  \rm Banach space,
property $(gw)$, tensor product operator, left-right
multiplication\newline
\indent $\hbox{                      \hskip.19truecm }$operator.}

\vskip 1truecm

\setlength{\baselineskip}{12pt}

\begin{abstract} Given Banach spaces $\X$ and $\Y$ and operators $A\in B(\X)$
and $B\in B(\Y)$, property $(gw)$ does not in general transfer from $A$ and $B$
to the tensor product operator $A\otimes B\in B(\X\overline{\otimes} \Y)$ or to the
elementary operator defined by $A$ and $B$, $\ST=L_AR_B\in B(B(Y,\X))$. In this article necessary
and sufficient conditions ensuring that property $(gw)$ transfers from
$A$ and $B$ to $A\otimes B$ and to $\ST$ will be given.
\end{abstract}


\section {\sfstp Introduction}\setcounter{df}{0}
\ \indent A Banach space operator satisfies property $(gw)$ if
the complement of its upper $B$-Weyl spectrum
in its approximate point spectrum is its set of isolated eigenvalues. 
Originally introduced by V. Rako\v
cevi\'c in \cite{R}, property $(w)$ has been intensively studied
recently, see the articles by P. Aiena et al. [1-5]. On the other
hand, M. Amouch and M. Berkani have introduced and studied
property $(gw)$, an extension of property $(w)$, see \cite{ABe,
BA, Am}. Note that property $(w)$ and $(gw)$ are variants of
Weyl's and generalized Weyl's theorems and they are also related
to $a$-Browder's and generalized $a$-Browder's theorems.\par
\indent In \cite{D}, given $\X$ and $\Y$ two Banach spaces and
$A\in B(\X)$ and $B\in B(\Y)$ two Banach space operators, the
second author gave necessary and sufficient conditions to
characterize when $A\otimes B$ satisfies property $(w)$, where
$A\otimes B\in B(\X\overline{\otimes} \Y)$ is the tensor product
operator defined on $\X\overline{\otimes} \Y$, the completion of
the algebraic tensor product $\X\otimes \Y$ of $\X$ and $\Y$ with
respect to a reasonable cross norm. In particular, when $A$ and
$B$ are isoloid operators that satisfy property $(w)$, necessary
and sufficient for $A\otimes B$ to satisfy property $(w)$ is that
the   $a$-Weyl spectrum equality for \rm $A\otimes B$ holds, see
\cite[Theorem 1]{D} and \cite{DDK}.\par \indent The main
objective of this work is to give necessary and sufficient
conditions to characterize when property $(gw)$ holds for
$A\otimes B\in B(\X\overline{\otimes} \Y)$, where the Banach
space operators $A\in B(\X)$ and $B\in B(\Y)$ satisfy property
$(gw)$. In particular,  property $(gw)$ for $A\otimes B$ will be
related to the generalized $a$-Weyl spectrum inclusion, see
section 3. A similar argument  leads to a characterization of
property $(gw)$ for the left-right multiplication operator
$\ST=L_AR_B\in B(B(\Y,\X))$. Finally, in section 2, after having
recalled the key notions of this article, the main known
characterizations of properties $(w)$ and
 $(gw)$ will be proved using arguments which differ from the original ones.
\par

\section {\sfstp Preliminary definitions and results}\setcounter{df}{0}
 \hskip.5truecm
\indent
\markright{ \hskip5truecm \rm ENRICO BOASSO and B. P. DUGGAL}
\indent From now on $\X$ shall denote an
infinite dimensional complex Banach space and $B(\X)$ the algebra of
all bounded linear maps defined on and with values in $\X$. Given $A\in
B(\X)$, $N(A)$ and $R(A)$ will stand for the null space and the
range of $A$ respectively. Recall that $A\in B(\X)$ is said to be
\it bounded below\rm, if $N(A)=0$ and $R(A)$ is closed. Denote
the \it approximate point spectrum \rm of $A$ by
$\sigma_a(A)=\{\lambda\in \mathbb C \colon A-\lambda \hbox{ is
not bounded below} \}$, where $A-\lambda$ stands for $A-\lambda
I$, $I$ the identity map of $B(\X)$. Let
$\sigma_s(A)=\{\lambda\in \mathbb C \colon R(A-\lambda)\neq \X\}$
denote
 the \it surjectivity spectrum \rm of $A$. Clearly,  $\sigma_a(A)\cup  \sigma_s(A)
=\sigma(A)$, the spectrum of $A$. \par

\indent Recall that $A\in B(\X)$  is said to be  a
\it Fredholm \rm operator if $\alpha(A)=\dim N(A)$ and $\beta(A)=\dim \X/R(A)$
are finite dimensional, in which case its \it index \rm is given by
$$
\ind(A)=\alpha(A)-\beta (A).
$$
If $R(A)$ is closed and $\alpha (A)$ is finite,
then $A\in  B(\X)$ is said to be  \it upper  semi-Fredholm \rm
(note that in this case ind $(A)$ is well defined),
while if $\alpha (A)$ and $\beta (A)$ are finite and equal, so that the index is zero,
$A$ is said to be \it Weyl. \rm
These classes of operators generate the
Fredholm or essential spectrum, the upper semi-Fredholm spectrum and the Weyl spectrum of $A\in B(\X)$, which will be denoted by
 $\sigma_e(A)$, $\sigma_{SF_+}(A)$ and $\sigma_w(A)$,
 respectively.   In addition, the \it Weyl essential approximate point spectrum
\rm of $A\in B(\X)$ is the set $\sigma_{aw}(A)=\{\lambda\in \sigma_a(A)\colon
\lambda\in \sigma_{SF_+}(A)\hbox{ or } 0<\ind (A-\lambda) \}$ (\cite{R1,R2}). \par


\indent Consider $A\in B(\X)$ and define $\Delta (A)=\sigma(T)\setminus\sigma_w(T)$.
Recall that according to \cite{C}, \it Weyl's theorem \rm holds for $A\in B(\X)$, if $\Delta (A)=E_0(A)$,
where $E_0(A)= \{\lambda\in\hbox{\rm iso } \sigma(A)\colon
0<\alpha (A-\lambda)<\infty\}$. Here and elsewhere in this article,
for $K\subseteq  \mathbb C$, iso $K$ will stand for the set of isolated points of
$K$ and acc $K=K\setminus$ iso $K$.
\par

\indent In recent years there have been generalizations of the
Fredholm concept. An operator $A\in B(\X)$ will be said to be \it
B-Fredholm\rm, if there exists $n\in\mathbb
N$ for which the range of $R(A^n)$ is closed and the induced
operator $A_n\in B(R(A^n))$ is Fredholm (\cite{B1}). In a similar
way it is possible to define upper  B-Fredholm operators
(\cite{B2}). Note that if for some $n\in\mathbb N$, $A_n\in
B(R(A^n))$ is Fredholm, then $A_m\in B(R(A^m))$ is Fredholm for
all $m\ge n$; moreover $\ind (A_n)=\ind (A_m)$, for all $m\ge n$.
Therefore, it makes sense to define the index of $A$ by $\ind
(A)=\ind (A_n)$. Recall that $A$ is said to be \it B-Weyl \rm  if
$A$ is B-Fredholm and $\ind(A)=0$. Naturally, from this class of
operators
the B-Weyl spectrum of $A\in B(\X)$ can be derived,
which will be denoted by
$\sigma_ {BW}(A)$ (\cite{B3}). In addition, the upper $B$-Weyl spectrum of $A$ is the set $\sigma_ {SBF_+^-}(A)= \{\lambda\in
\mathbb C\colon A-\lambda  \hbox{ is not upper semi B-Fredholm or
} 0<\ind (A-\lambda)\}$  (\cite{BK}).\par

\indent Let  $A\in B(\X)$ and consider the sets $\Delta^g(A) =\sigma (A)\setminus \sigma_ {BW}(A)$
and $E(A)= \{\lambda\in
\hbox{\rm iso } \sigma(A)\colon 0<\alpha(A-\lambda)\}$.
Recall that \it generalized Weyl's theorem \rm holds for $A\in B(\X)$,
if $\Delta^g(A)= E(A)$ (\cite{BK}).\par

\indent Consider $A\in B(\X)$ and define $\Delta_a(A)= \sigma_a
(A)\setminus \sigma_{aw}(A)$ and
$\Delta^g_a(A)=\sigma_a(A)\setminus \sigma_{SBF_+^-}(A)$. The
operator $A$  is said to satisfy \it property (w)\rm, if
$\Delta_a(A)= E_0(A)$. For further information on key properties
of this notion, see \cite{R} and  [1-5]. Next follows the main
definition of this work, see \cite{ ABe, BA, Am}.

\begin{df} Let $\X$ be a Banach space and consider $A\in B(\X)$. The operator
$A$ is said to satisfy propety $(gw)$ if
$$
\Delta^g_a(A)=E(A).
$$
\end{df}

\indent To recall some of the most relevant results related to
propeties $(w)$ and $(gw)$, first of all several notions need to be
recalled.\par

\indent The \it ascent \rm  (respectively \it the descent\rm ) of
$A\in B(\X)$ is the smallest non-negative integer $a=asc (A)$
(respectively $d=dsc (A)$) such that $N(A^a)=N(A^{a+1})$ (respectively
$R(A^d)=R(A^{d+1})$); if such an integer does not exist, then
$asc(A)=\infty$ (respectively $dsc(A)=\infty$). Recall
that $\lambda\in \sigma (A)$ is said to be \it a pole \rm of
$A$, if the ascent and the descent of $A-\lambda$ is finite.
The set of all poles of $A\in B(\X)$ will be denoted by
$\Pi (A)$. In addition, the set of \it poles of finite rank \rm of $A$ is the set
$\Pi_0(A)=\{\lambda\in \Pi(A)\colon \alpha(A-\lambda)<\infty\}$.
\par

\indent Recall that a Banach space operator $A\in B(\X)$ is said to be \it Drazin invertible\rm,
if there exists a necessarily unique $B\in B(\X)$ and some $m\in \mathbb N$ such that
$$
A^m=A^mBA, \hskip.3truecm BAB=B, \hskip.3truecm AB=BA.
$$
According to \cite[Theorem 4]{K}, necessary and sufficient for $A\in B(\X)$ to be
Drazin invertible is that $0\in \Pi (A)$.  If $DR(B(\X))=\{ A\in B(\X)\colon
A\hbox{ is Drazin invertible} \}$, then the \it Drazin spectrum \rm of
$A\in B(\X)$ is the set $\sigma_{DR}(A)=\{\lambda\in \mathbb C\colon
A-\lambda\notin DR(B(\X) \}$ (\cite{BS, Bo}). \par


\indent Next denote by $LD(\X)= \{ A\in B(\X)\colon \hbox{  }a=
\asc(A)<\infty\hbox{  and } R(A^{a+1 }) \hbox{ is closed}\}$ the
set of \it left Drazin invertible \rm operators. Then, given
$A\in B(\X)$, the \it left Drazin spectrum \rm of $A$ is the set
$\sigma_{LD}(A)= \{\lambda\in \mathbb C\colon A-\lambda \notin
LD(\X)\}$. Note that according to \cite[Lemma 2.12]{BK}, $\sigma_
{SBF_+^-}(A)\subseteq \sigma_{LD}(A)\subseteq \sigma_a(A)$.\par

\indent In addition, $\lambda\in\sigma_a(A)$ is said to be a \it
left pole \rm of $A$, if $A-\lambda$ is left Drazin invertible.
The set of all left poles of $A$ will be denoted by $\Pi^l(A)$
(\cite{BK, AS}). Note that $\Pi (A)\subseteq \Pi^l(A)$
(\cite[Theorem 4 and Corollary]{K}). In addition, the set of \it
left poles of finite rank \rm of $A$ is the set $\Pi_0^l(A)=
\{\lambda\in\Pi ^l(A)\colon \alpha(A-\lambda)<\infty\}$.
 \par

\indent Recall that $A\in B(\X)$ is said  to satisfy \it $a$-Browder's theorem\rm,
 if $\sigma_{aw}(A)=\sigma_a(A)\setminus \Pi_0^l(A)$, while $A$ is said to satisfy
\it generalized $a$-Browder's theorem\rm, if  $\sigma_{SBF_+^-}(A)=\sigma_a(A)\setminus \Pi^l(A)$.\par

\indent Next some characterizations of operators for which properties $(w)$ or
$(gw)$ hold will be presented. However, in first place some preparation is needed.\par

\begin{lem}\label{lem1} Let $\X$ be a Banach space and consider $A\in B(\X)$.
Then, $\Pi^l(A)\setminus \Pi(A)\subseteq \hbox{ \rm acc }\sigma (A)$.\end{lem}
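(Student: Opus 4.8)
\emph{Proof strategy.} I would argue by contradiction, after the usual reduction to the point $0$. Given $\lambda\in\Pi^l(A)\setminus\Pi(A)$, replace $A$ by $A-\lambda$, so that $0\in\Pi^l(A)$ while $0\notin\Pi(A)$. Since $\Pi^l(A)\subseteq\sigma_a(A)\subseteq\sigma(A)$ we already know $0\in\sigma(A)$, so proving $0\in\acc\,\sigma(A)$ amounts to ruling out $0\in\iso\sigma(A)$. Assume, toward a contradiction, that $0\in\iso\sigma(A)$; the aim is to show that then $0$ would be a pole of $A$, contradicting $0\notin\Pi(A)$.

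Being an isolated point of $\sigma(A)$, $0$ carries a Riesz idempotent $P$, giving an $A$-reducing decomposition $\X=M\oplus N$ with $M=R(P)$ and $N=N(P)$ closed; writing $A_0=A|_M$ and $A_1=A|_N$ one has $\sigma(A_0)=\{0\}$, so $A_0$ is quasinilpotent, while $A_1$ is invertible. For every $n$, $N(A^n)=N(A_0^n)$ and $R(A^n)=R(A_0^n)\oplus N$ (a topological sum), hence $\asc(A)=\asc(A_0)$ and $\dsc(A)=\dsc(A_0)$. From $0\in\Pi^l(A)$ it follows that $a:=\asc(A_0)<\infty$ and that $R(A^{a+1})$, and therefore $R(A_0^{a+1})$, is closed. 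Thus everything reduces to showing that the quasinilpotent operator $A_0$ is in fact \emph{nilpotent}: once $A_0^{a}=0$ we get $\dsc(A)=\dsc(A_0)\le a<\infty$, and together with $\asc(A)=a<\infty$ this makes $0$ a pole of $A$, which is the contradiction sought.

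The nilpotency of $A_0$ is the one place where real work is needed, and I expect it to be the main obstacle. The plan is to use: (i) the standard fact that, for an operator of finite ascent $a$, closedness of $R(A_0^{a+1})$ forces $R(A_0^{a})$ to be closed; (ii) the elementary inequality $\|(A_0|_Y)^n\|\le\|A_0^n\|$ for any $A_0$-invariant subspace $Y$, which gives $r(A_0|_Y)\le r(A_0)=0$; and (iii) the fact that a bounded below operator on a nonzero Banach space has strictly positive spectral radius. Setting $T:=A_0|_{R(A_0^{a})}$ on the closed (by (i)) $A_0$-invariant subspace $R(A_0^{a})$, one checks that $T$ is injective --- if $A_0^{a}y\in N(A_0)$ then $y\in N(A_0^{a+1})=N(A_0^{a})$, so $A_0^{a}y=0$ --- and that $R(T)=R(A_0^{a+1})$ is closed, so $T$ is bounded below; if $R(A_0^{a})\ne\{0\}$, then (iii) gives $r(T)>0$, contradicting $r(T)\le 0$ from (ii). Hence $R(A_0^{a})=\{0\}$, i.e.\ $A_0^{a}=0$, which closes the argument. (Alternatively, one could invoke the known decomposition of a left Drazin invertible operator into the direct sum of a bounded below operator and a nilpotent one, and note that $\sigma(A_0)=\{0\}$ kills the bounded below summand.) The remaining points --- transferring closedness of ranges across $\X=M\oplus N$ and matching ascents and descents of $A$ and $A_0$ --- are routine.
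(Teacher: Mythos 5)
Your argument is correct, but it takes a much longer, self-contained road than the paper, whose entire proof is the one-liner: $\lambda\in\Pi^l(A)$ gives $\lambda\in\iso\sigma_a(A)$, while $\lambda\notin\Pi(A)$ gives $\lambda\notin\iso\sigma(A)$, hence $\lambda\in\textrm{acc}\,\sigma(A)$. The second implication there --- that a left pole which is isolated in $\sigma(A)$ must be a pole --- is exactly the assertion you identified as ``the one place where real work is needed'' and then proved from scratch; the paper simply asserts it, implicitly leaning on the known structure theory of left poles (Aiena--Sanabria, Berkani--Koliha). Your route --- Riesz idempotent splitting $\X=M\oplus N$ with $A_0=A|_M$ quasinilpotent and $A|_N$ invertible, transfer of ascent, descent and range-closedness to $A_0$, and then nilpotency of $A_0$ by showing that $A_0$ restricted to the closed subspace $R(A_0^{a})$ would otherwise be bounded below with zero spectral radius --- checks out in every detail: injectivity of that restriction follows from $N(A_0^{a+1})=N(A_0^{a})$; closedness of $R(A_0^{a})$ follows from that of $R(A_0^{a+1})$ via the estimate $\|A_0\|\,\|A_0^{a}x\|\ge\|A_0^{a+1}x\|\ge c\,d(x,N(A_0^{a}))$ on the quotient by the stabilized kernel; and a bounded below operator on a nonzero Banach space cannot have spectral radius $0$ because $\partial\sigma\subseteq\sigma_a$. (Your parenthetical alternative, decomposing a left Drazin invertible operator as bounded below $\oplus$ nilpotent and letting $\sigma(A_0)=\{0\}$ kill the bounded below summand, is the citation the paper is in effect making.) What the paper's version buys is brevity; what yours buys is independence from the left-pole machinery, at the price of reproving a slice of it.
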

\begin{proof} If $\lambda\in \Pi^l(A)\setminus \Pi(A)$, then
$\lambda\in \iso\sigma_a(A)$ and
$\lambda\notin\iso\sigma(A)$, which implies that $\lambda\in\rm{acc}\sigma(A)$.\end{proof}

\indent In the following proposition some characterizations of
operators satisfying property $(w)$ will be given. The results
are known, but the proofs differs from the original ones, compare
with \cite{R, AP}. Note that in what follows, if $A\in B(\X)$,
$\X$ a Banach space, then $\partial\sigma(A)$ will stand for the
topological boundary of $\sigma(A)$.\par
\markright{ \hskip5truecm \rm Property $(gw)$}
\begin{pro}\label{pro2} Let $\X$ be a Banach space and consider $A\in
B(\X)$. Then, the following statements are equivalent.\par
\noindent \rm (i) \it Property $(w)$ holds for $A$;\par
\noindent \rm (ii) \it $a$-Browder's theorem holds for $A$ and $E_0(A)=\Pi^l_0(A)$;\par
\noindent \rm (iii) \it Weyl's theorem holds for $A$ and $\Delta_a(A)=\Delta (A)$;\par
\noindent \rm (iv) \it Weyl's theorem holds for $A$ and $\Delta_a(A)\subseteq$ \rm iso \it$\sigma(A)$;\par
\noindent \rm (v) \it Weyl's theorem holds for $A$ and $\Delta_a(A)\subseteq\partial\sigma (A)$.
\end{pro}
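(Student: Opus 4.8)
The plan is to prove the cycle of implications (i)$\Rightarrow$(ii)$\Rightarrow$(iii)$\Rightarrow$(iv)$\Rightarrow$(v)$\Rightarrow$(i), which gives the equivalence of all five statements. I will use three standing facts: the general inclusions $\Delta(A)\subseteq\Delta_a(A)$ (from $\sigma_{aw}(A)\subseteq\sigma_w(A)$ and the remark that a point of $\Delta(A)$ is an eigenvalue, hence lies in $\sigma_a(A)$) and $\Pi_0^l(A)\subseteq\Delta_a(A)$ (a left pole of finite rank is upper semi-Browder, hence outside $\sigma_{aw}(A)$); the inclusion $\iso\sigma(A)\subseteq\partial\sigma(A)$; and two classical results, namely (a) if $A-\lambda$ is upper semi-Fredholm and $A$ has SVEP at $\lambda$ --- in particular if $\lambda\in\iso\sigma(A)$ --- then $\asc(A-\lambda)<\infty$ while $R((A-\lambda)^{n})$ is closed for every $n$, so that $\lambda\in\Pi_0^l(A)$ as soon as $\alpha(A-\lambda)<\infty$; and (b) the punctured neighbourhood theorem for upper semi-Fredholm operators.

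For (i)$\Rightarrow$(ii): assuming $\Delta_a(A)=E_0(A)$, pick $\lambda\in\Delta_a(A)=E_0(A)$, so that $\lambda\in\iso\sigma(A)$, $0<\alpha(A-\lambda)<\infty$, and $\lambda\notin\sigma_{aw}(A)$; the last condition makes $A-\lambda$ upper semi-Fredholm, so fact (a) yields $\lambda\in\Pi_0^l(A)$. Thus $\Delta_a(A)\subseteq\Pi_0^l(A)$, which together with the standing inclusion $\Pi_0^l(A)\subseteq\Delta_a(A)$ is precisely $a$-Browder's theorem, and moreover $E_0(A)=\Delta_a(A)=\Pi_0^l(A)$. (Conversely (ii)$\Rightarrow$(i) is immediate: $a$-Browder's theorem reads $\Delta_a(A)=\Pi_0^l(A)$, and the second clause of (ii) then gives $\Delta_a(A)=E_0(A)$.)

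For (ii)$\Rightarrow$(iii): under (ii) one has $\Delta_a(A)=\Pi_0^l(A)=E_0(A)\subseteq\iso\sigma(A)$, and here Lemma~\ref{lem1} enters. If $\lambda\in\Pi_0^l(A)=E_0(A)$ then $\lambda\in\Pi^l(A)$ and $\lambda\notin\acc\sigma(A)$, so Lemma~\ref{lem1} forces $\lambda\in\Pi(A)$, whence $\lambda\in\Pi_0(A)$ because $\alpha(A-\lambda)<\infty$; thus $\Pi_0^l(A)=\Pi_0(A)$. Combining this with the standing inclusions $\Pi_0(A)\subseteq\Delta(A)\subseteq\Delta_a(A)=\Pi_0^l(A)$ forces $\Delta(A)=\Pi_0(A)=E_0(A)$, that is, Weyl's theorem, and at the same time $\Delta_a(A)=\Pi_0^l(A)=\Pi_0(A)=\Delta(A)$, which is (iii). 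The implications (iii)$\Rightarrow$(iv)$\Rightarrow$(v) are then trivial, since under Weyl's theorem $\Delta_a(A)=\Delta(A)=E_0(A)\subseteq\iso\sigma(A)\subseteq\partial\sigma(A)$.

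For (v)$\Rightarrow$(i): by Weyl's theorem and the standing inclusion, $E_0(A)=\Delta(A)\subseteq\Delta_a(A)$, so only $\Delta_a(A)\subseteq E_0(A)$ is left. Let $\lambda\in\Delta_a(A)$; then $A-\lambda$ is upper semi-Fredholm and, by the hypothesis of (v), $\lambda\in\partial\sigma(A)$. By fact (b) there is a deleted disc about $\lambda$ on which $A-\mu$ is upper semi-Fredholm with constant nullity and constant index; choosing in that disc a point $\mu$ of the resolvent set --- possible since $\lambda\in\partial\sigma(A)$ --- forces both constants to be $0$, so $A-\mu$ is invertible throughout the deleted disc (hence $\lambda\in\iso\sigma(A)$) and $A-\lambda$ is Fredholm of index $0$ (hence $\lambda\notin\sigma_w(A)$). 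Therefore $\lambda\in\Delta(A)=E_0(A)$ by Weyl's theorem, which closes the cycle. I expect the only genuinely delicate point to be fact (a) used in (i)$\Rightarrow$(ii) --- deriving finiteness of the ascent from upper semi-Fredholmness at an isolated point of the spectrum, i.e.\ the SVEP input; everything else is bookkeeping with the standing inclusions and a single appeal to Lemma~\ref{lem1}.
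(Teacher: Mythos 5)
Your argument is correct, but it is organized differently from the paper's proof and replaces its two external citations with self-contained arguments. The paper proves (i)$\Leftrightarrow$(ii) by quoting Rako\v cevi\'c's decomposition $\Delta_a(A)=\Pi_0^l(A)\cup(\acc\,\sigma_a(A)\setminus\sigma_{aw}(A))$ together with the characterization of $a$-Browder's theorem as $\acc\,\sigma_a(A)\subseteq\sigma_{aw}(A)$; you instead establish $\Delta_a(A)\subseteq\Pi_0^l(A)$ directly from the SVEP-at-isolated-points argument for upper semi-Fredholm operators, which amounts to reproving the relevant half of that citation. More substantially, the paper reaches Weyl's theorem by the implication (i)$\Rightarrow$(iii), using the fact from Caradus--Pfaffenberger--Yood that any $\lambda\in\Delta_a(A)\setminus\Delta(A)$ is interior to $\sigma(A)$, whereas you go (ii)$\Rightarrow$(iii) by invoking Lemma~\ref{lem1} to identify $\Pi_0^l(A)$ with $\Pi_0(A)$ and then squeezing $\Delta(A)$ between $\Pi_0(A)$ and $\Delta_a(A)$; this route is closer in spirit to what the paper does later for property $(gw)$ (Proposition~\ref{pro3} and Remark~\ref{rem4}) and makes the equality $E_0(A)=\Pi_0(A)$ explicit along the way. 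Your (v)$\Rightarrow$(i) via the punctured neighbourhood theorem proves exactly the interior-point fact that the paper cites, so the two arguments coincide in substance there. One bookkeeping remark: the inclusion $\Pi_0(A)\subseteq\Delta(A)$ that you use in (ii)$\Rightarrow$(iii) is not among your declared standing facts; it is standard (a finite-rank pole is a Browder, hence Weyl, point of the spectrum), but it should be stated alongside $\Delta(A)\subseteq\Delta_a(A)$ and $\Pi_0^l(A)\subseteq\Delta_a(A)$ since the sandwich argument depends on it.
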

\begin{proof} (i) $\Rightarrow$ (ii). Suppose that $A$ satisfies property $(w)$,
equivalently $\Delta_a (A)=E_0(A)$. Note that according to \cite[Corollaries 2.2 and 2.3]{R3},
$\Delta_a(A)= \Pi_0^l(A)\cup $ (acc $\sigma_a(A)\setminus \sigma_{aw}(A))$. Since $E_0(A)\subseteq$ iso $\sigma (A)$,
acc $\sigma_a(A)\subseteq \sigma_{aw}(A)$ and $\Pi_0^l(A)=E_0(A)$.
However, according again to \cite[Corollaries 2.2 and 2.3]{R3}, $a$-Browder theorem is equivalent to
acc $\sigma_a(A)\subseteq \sigma_{aw}(A)$.\par

\indent (ii) $\Rightarrow$ (i). If $a$-Browder's theorem holds for $A$, then
$\Delta_a(A)= \Pi_0^l(A)$. Clearly, if in addition $\Pi^l_0(A)=E_0(A)$, then $A$ satisfies poperty $(w)$.\par

\indent (i) $\Rightarrow$ (iii). Note that according to \cite[Theorems 3.2.10 and 4.2.1]{CPY},
if $\lambda\in \Delta_a(A)\setminus \Delta(A)$, then $\lambda$ is an interior point of $\sigma (A)$.
As a result, if property $(w)$ holds,  then $E_0(A)=\Delta(A)=\Delta_a(A)$. \par
\indent (iii) $\Rightarrow$ (iv). If $\Delta (A)=E_0(A)$ and $\Delta_a(A)=\Delta (A)$,
then $\Delta_a(A)\subseteq $ iso $\sigma (A)$.\par

\indent (iv) $\Rightarrow$ (v). Clear.\par

\indent (v) $\Rightarrow$ (i). As before, since  $\lambda\in \Delta_a(A)\setminus \Delta(A)$ is an interior point of $\sigma (A)$,
$\Delta_a(A)=\Delta (A)$. If in addition $\Delta (A)=E_0(A)$, then $\Delta_a(A)=E_0(A)$.
\end{proof}

\indent Next operators satisfying property $(gw)$ will be
characterized. As in the case of property $(w)$, all but two of
the results are known, however, since the proofs differ from the
original ones, they will be given. Compare with  \cite{ABe}. \par

\begin{pro}\label{pro3} Let $\X$ be a Banach space and consider $A\in
B(\X)$. Then, the following statements are equivalent.\par
\noindent \rm (i) \it Property $(gw)$ holds for $A$;\par
\noindent \rm (ii) \it generalized  $a$-Browder's theorem holds for $A$ and $E(A)=\Pi^l(A)$;\par
\noindent \rm (iii) \it property $(w)$ holds for $A$ and $E(A)=\Pi^l(A)$;\par
\noindent \rm (iv) \it generalized Weyl's theorem holds for $A$ and $\Delta_a^g(A)=\Delta^g (A)$;\par
\noindent \rm (v) \it generalized Weyl's theorem holds for $A$ and  $\Delta_a^g(A)\subseteq$ \rm iso \it $\sigma(A)$;\par
\noindent \rm (vi) \it generalized Weyl's theorem holds for $A$ and  $\Delta_a^g(A)\subseteq$ $\partial\sigma (A)$.
\end{pro}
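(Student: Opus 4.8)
Following the scheme of Proposition \ref{pro2}, the plan is to replace the semi-Fredholm ingredients used there by their $B$-Fredholm counterparts, so that two facts carry the whole argument. The first is the generalized analogue of Rako\v cevi\'c's decomposition: for every $A\in B(\X)$ one has $\Delta^g_a(A)=\Pi^l(A)\cup(\acc\,\sigma_a(A)\setminus\sigma_{SBF_+^-}(A))$, and generalized $a$-Browder's theorem holds for $A$ precisely when $\acc\,\sigma_a(A)\subseteq\sigma_{SBF_+^-}(A)$; this is obtained from the inclusions $\Pi^l(A)\subseteq\Delta^g_a(A)\subseteq\sigma_a(A)$ (recall $\sigma_{SBF_+^-}(A)\subseteq\sigma_{LD}(A)\subseteq\sigma_a(A)$ by \cite[Lemma 2.12]{BK}), from the fact that every left pole is an isolated point of $\sigma_a(A)$, and from the fact that if $\lambda\in\iso\,\sigma_a(A)$ and $A-\lambda$ is upper semi $B$-Fredholm then $A-\lambda$ is left Drazin invertible. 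The second fact is the generalized analogue of \cite[Theorems 3.2.10 and 4.2.1]{CPY}: if $\lambda\in\Delta^g_a(A)\setminus\Delta^g(A)$ then $A-\lambda$ is upper semi $B$-Fredholm with $\ind(A-\lambda)<0$ (possibly $-\infty$), whence, by the punctured neighbourhood theorem for upper semi $B$-Fredholm operators, $A-\mu$ fails to be surjective throughout a punctured neighbourhood of $\lambda$, so that $\lambda$ is an interior point of $\sigma(A)$.

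Granting these, the implications would go as follows. (i)$\Leftrightarrow$(ii): under (i), since $E(A)\subseteq\iso\,\sigma(A)$ while $\acc\,\sigma_a(A)$ is disjoint from $\iso\,\sigma(A)$, the decomposition above forces $\acc\,\sigma_a(A)\setminus\sigma_{SBF_+^-}(A)=\emptyset$, i.e.\ generalized $a$-Browder's theorem, and $\Pi^l(A)=\Delta^g_a(A)=E(A)$; conversely generalized $a$-Browder's theorem gives $\Delta^g_a(A)=\Pi^l(A)$, which with $\Pi^l(A)=E(A)$ is (i). (i)$\Rightarrow$(iii): intersecting $E(A)=\Pi^l(A)$ with $\{\lambda:\alpha(A-\lambda)<\infty\}$ gives $E_0(A)=\Pi^l_0(A)$, and $\sigma_{SBF_+^-}(A)\subseteq\sigma_{aw}(A)$ turns generalized $a$-Browder's theorem into $a$-Browder's theorem, so Proposition \ref{pro2}((ii)$\Rightarrow$(i)) yields property $(w)$, while $E(A)=\Pi^l(A)$ is retained. (iii)$\Rightarrow$(i): by Proposition \ref{pro2} property $(w)$ gives $a$-Browder's theorem, hence generalized $a$-Browder's theorem by \cite{BK}, that is $\Delta^g_a(A)=\Pi^l(A)$, which with $\Pi^l(A)=E(A)$ is (i). (i)$\Rightarrow$(iv): by (i) the set $\Delta^g_a(A)=E(A)\subseteq\iso\,\sigma(A)$ has no interior point, so the second fact gives $\Delta^g_a(A)\subseteq\Delta^g(A)$; conversely, if $\lambda\in\Delta^g(A)=\sigma(A)\setminus\sigma_{BW}(A)$ then $\lambda\in\sigma_a(A)$ (a bounded below $B$-Weyl operator is invertible) and $\lambda\notin\sigma_{SBF_+^-}(A)$ because $\sigma_{SBF_+^-}(A)\subseteq\sigma_{BW}(A)$, so $\lambda\in\Delta^g_a(A)$; hence $\Delta^g(A)=\Delta^g_a(A)=E(A)$, which is generalized Weyl's theorem together with $\Delta^g_a(A)=\Delta^g(A)$. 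The implications (iv)$\Rightarrow$(v)$\Rightarrow$(vi) are immediate, since $\iso\,\sigma(A)\subseteq\partial\sigma(A)$. Finally (vi)$\Rightarrow$(i): the second fact and $\Delta^g_a(A)\subseteq\partial\sigma(A)$ give $\Delta^g_a(A)\subseteq\Delta^g(A)$, while generalized Weyl's theorem gives $\Delta^g(A)=E(A)\subseteq\iso\,\sigma(A)\subseteq\sigma_a(A)$, so $\sigma_{SBF_+^-}(A)\subseteq\sigma_{BW}(A)$ yields $\Delta^g(A)\subseteq\Delta^g_a(A)$; thus $\Delta^g_a(A)=\Delta^g(A)=E(A)$, which is property $(gw)$.

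The main obstacle is the second fact, the generalized analogue of the interior-point result of \cite{CPY}: making it precise and proving it requires the punctured neighbourhood theorem for upper semi $B$-Fredholm operators together with control of the index (possibly $-\infty$) on that neighbourhood. Everything else is bookkeeping with the inclusions among $\sigma_{SBF_+^-}$, $\sigma_{aw}$, $\sigma_{BW}$, $\sigma_{LD}$, $\Pi$ and $\Pi^l$ and the corresponding finite-rank versions, together with Lemma \ref{lem1}, Proposition \ref{pro2}, and the known equivalence of $a$-Browder's and generalized $a$-Browder's theorems.
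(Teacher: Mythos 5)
Your proposal is correct and follows essentially the same route as the paper: both arguments pivot on the decomposition $\Delta^g_a(A)=\Pi^l(A)\cup(\mathrm{acc}\,\sigma_a(A)\setminus\sigma_{SBF_+^-}(A))$ of \cite[Theorem 2.8]{BK} and on the fact, taken from \cite[Theorem 3.1 and Corollary 3.2]{BS2}, that points of $\Delta^g_a(A)\setminus\Delta^g(A)$ are interior to $\sigma(A)$ --- exactly the ``second fact'' you flag as the main obstacle, which the paper likewise handles by citation rather than proof. The only divergence is local: for (i)$\Rightarrow$(iv) the paper detours through generalized Browder's theorem via \cite[Theorem 3.8]{BK} and \cite[Corollary 2.6]{B3}, whereas you obtain $\Delta^g_a(A)=\Delta^g(A)$ directly from the interior-point fact together with the unconditional inclusion $\Delta^g(A)\subseteq\Delta^g_a(A)$; both are valid.
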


\markright{ \hskip5truecm \rm ENRICO BOASSO and B. P. DUGGAL}

\begin{proof} (i) $\Rightarrow$ (ii). Suppose that property $(gw)$ holds for $A$, equivalently
$\Delta_a^g (A)=E(A)$. Note that according to \cite[Theorem
2.8]{BK}, $\Delta_a^g(A)= \Pi^l(A)\cup$  (acc
$\sigma_a(A)\setminus \sigma_{SBF_+^-}(A))$. Since
$E(A)\subseteq$ iso $\sigma (A)$, acc $\sigma_a(A)\subseteq
\sigma_{SBF_+^-}(A)$ and  $\Pi^l(A)=E(A)$. However, according
again to  \cite[Theorem 2.8]{BK}, generalized $a$-Browder's
theorem is equivalent to acc $\sigma_a(A)\subseteq
\sigma_{SBF_+^-}(A)$.\par (ii) $\Rightarrow$ (i). If $A$
satisfies generalized $a$-Browder's theorem, then $\Delta_a^g(A)=
\Pi^l(A)$. Clearly, if in addition $\Pi^l(A)=E(A)$, then $A$
satisfies property $(gw)$.\par
 (ii) $\Rightarrow$ (iii). It is enough to prove that property $(w)$ holds.
 Since $E(A)=\Pi^l(A)$, $E_0(A)=\Pi^l_0(A)$. Moreover, according to
\cite[Theorem 2.2]{AZ}, $a$-Browder's theorem holds for $A$. Consequently,
according to Proposition \ref{pro2}(ii), property $(w)$ holds.\par
 (iii) $\Rightarrow$ (ii). Since property $(w)$ holds for $A$, according to Proposition \ref{pro2}(ii)
and \cite[Theorem 2.2]{AZ}, $A$ satisfies generalized
$a$-Browder's theorem. \par (i) $\Rightarrow$ (iv). Suppose that
$E(A)=\Delta^g_a (A)$. According to the equivalence between
statements (i) and (ii), $\Pi^l(A)=E(A)\subseteq$ iso
$\sigma(A)$. Therefore, according to Lemma \ref{lem1},
$E(A)=\Pi^l(A)=\Pi(A)$. In addition, according again to the
equivalence between statements (i) and (ii) and \cite[Theorem
3.8]{BK}, generalized Browder's theorem holds for $A$. In
particular, according to \cite[Corollary 2.6]{B3}, generalized
Weyl's theorem holds for $A$. Now well, since property $(gw)$ and
generalized Weyl's theorem hold for $A$,
$\Delta_a^g(A)=E(A)=\Delta^g (A)$.
\par
(iv) $\Rightarrow$ (i). If $E(A)=\Delta^g(A)$ and $\Delta^g(A)=\Delta^g_a (A)$,
then $E(A)=\Delta^g_a (A)$, equivalently property $(gw)$ holds for $A$.\par
(iv) $\Rightarrow$ (v). According to the equivalence between statements (i) and (iv),
$\Delta_a^g(A)=E(A)\subseteq$ iso $\sigma (A)$,\par
\par
(v) $\Rightarrow$ (vi). Clear.\par

(vi) $\Rightarrow$ (i). It is not difficult to prove,  using in particular \cite[Theorem 3.1 and Corollary 3.2]{BS2},
that if $\lambda\in \Delta_a^g(A)\setminus\Delta^g (A)$, then $\lambda$ is an interior point of $\sigma (A)$.
Consequently, statement (vi) implies that $\Delta_a^g(A)=\Delta^g (A)$. As a result, according to statement (iv),
$A$ satisfies property $(gw)$.
\end{proof}

\begin{rema}\label{rem4} \rm Let $\X$ be a Banach space and consider $A\in
B(\X)$. Note that according to the proof of Proposition
\ref{pro3}, if $A$ satisfies property $(gw)$, then $E(A)=\Pi
(A)=\Pi^l(A)$.\par \indent Similarly, if property $(w)$ holds for
$A$,   $E_0(A)=\Pi_0 (A)=\Pi^l_0(A)$. In fact, according to
Proposition \ref{pro2}(ii), $\Pi_0 (A)\subseteq \Pi^l_0(A)=
E_0(A)$. However, since $E_0(A)\subseteq$ iso
$\sigma(A)$, according to Lemma \ref{lem1}, $\Pi_0(A)=\Pi_0^l(A)$.
\end{rema}

\indent Recall that a Banach space operator $A\in B(\X)$ is said to be \it
isoloid\rm, if points $\lambda\in $ iso $\sigma (A)$ are eigenvalues of the operator,
equivalently $E(A)=$ iso $\sigma(A)$. In addition,
$A$ is said to be  \it polaroid\rm,
if iso $\sigma (A)=\Pi (A)$. Clearly, a polaroid operator is isoloid.
When property $(gw)$ holds, both notions are equivalent. \par

\begin{lem}\label{lem16} Let $X$ be a Banach space and consider $A\in B(\X)$ such that
property $(gw)$ holds for $A$. Then the following statements are equivalent.\par
\noindent \rm (i)\it $A$ is isoloid;\par
\noindent \rm (ii)\it $A$ is polaroid.
\end{lem}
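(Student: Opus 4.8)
The plan is to read off both implications from Remark \ref{rem4}, which already records that property $(gw)$ forces $E(A)=\Pi(A)=\Pi^l(A)$; the rest is the elementary interplay between the sets $E(A)$, $\Pi(A)$ and $\hbox{iso }\sigma(A)$.

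For the implication (ii) $\Rightarrow$ (i) I would not even use property $(gw)$: poles of $A$ are eigenvalues and isolated points of the spectrum, so one always has the chain $\Pi(A)\subseteq E(A)\subseteq\hbox{iso }\sigma(A)$. If $A$ is polaroid, i.e.\ $\hbox{iso }\sigma(A)=\Pi(A)$, then the two outer sets coincide, forcing $E(A)=\hbox{iso }\sigma(A)$, which is exactly the statement that $A$ is isoloid. (This is the ``a polaroid operator is isoloid'' remark made just above the lemma, spelled out.)

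For (i) $\Rightarrow$ (ii) I would invoke the hypothesis that property $(gw)$ holds for $A$. By Remark \ref{rem4} this gives $E(A)=\Pi(A)$. If in addition $A$ is isoloid, then $E(A)=\hbox{iso }\sigma(A)$, and combining the two equalities yields $\Pi(A)=\hbox{iso }\sigma(A)$, i.e.\ $A$ is polaroid.

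There is no real obstacle here: both directions are one-line deductions once Remark \ref{rem4} is available, so the only point requiring any care is to make sure the general inclusions $\Pi(A)\subseteq E(A)\subseteq\hbox{iso }\sigma(A)$ are cited correctly (they follow at once from the definitions of poles, eigenvalues, and isolated spectral points, together with the fact that a pole is an isolated point of the spectrum).
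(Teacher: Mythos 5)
Your proof is correct and follows essentially the same route as the paper: the paper's one-line proof of (i) $\Rightarrow$ (ii) is precisely your appeal to Remark \ref{rem4} combined with isoloidness to get $\hbox{iso }\sigma(A)=E(A)=\Pi(A)$, and the converse is the remark, stated just before the lemma, that a polaroid operator is always isoloid (your chain $\Pi(A)\subseteq E(A)\subseteq\hbox{iso }\sigma(A)$). You have merely written out explicitly what the paper leaves implicit.
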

\begin{proof}Suppose that $A$ isoloid. Then, according to Remark \ref{rem4}, iso $\sigma(A)=E(A)=\Pi (A)$.
\end{proof}

\markright{ \hskip5truecm \rm Property $(gw)$}
\section {\sfstp The transfer property}\setcounter{df}{0}
\
\indent Given Banach spaces $\X$ and $\Y$, $\X\overline{\otimes}\Y$ will stand for
the completion, endowed with a reasonable uniform cross-norm, of the
algebraic tensor product $\X\otimes \Y$ of $\X$ and $\Y$. In addition,
if $A\in B(\X)$ and $B\in B(\Y)$, then $A\otimes B\in B(\X\overline{\otimes}\Y)$ will denote
the tensor product operator defined by $A$ and $B$.\par

\indent In this section the  \it transfer property  for operators
satisfying property $(gw)$ \rm will be studied, i.e., conditions
equivalent to the fact that $A\otimes B\in  B(\X
\overline{\otimes} \Y)$ satisfies property $(gw)$ will be given,
where $A\in B(\X)$ and $B\in B(Y)$ satisfy property $(gw)$. In
particular, the transfer property for operators satisfying
property $(gw)$ will be related  to the \it generalized $a$-Weyl
spectrum inclusion for \rm $A\otimes B$, i.e.,
$$
 \sigma_a(A)\sigma_{SBF_+^-}(B)\cup\sigma_{SBF_+^-}(A)\sigma_a(B)\subseteq\sigma_{SBF_+^-}(A\otimes B).
$$

\indent On the other hand, $\ST\in B(B(\Y, \X))$ will denote the
multiplication operator defined by $A\in B(\X)$ and $B\in B(\Y)$,
i.e., $\ST (U)=AUB$, where $U\in B(\Y, \X)$ and $\X$ and $\Y$ are
two Banach spaces. Note that $\ST=L_AR_B$, where $L_A\in  B(B(\Y,
\X))$ and $R_B\in  B(B(\Y, \X))$ are the left and right
multiplication operators defined by $A$ and $B$ respectively,
i.e., $L_A(U)=AU$ and $R_B(U)=UB$, $U\in  B(\Y, \X)$.\par

\indent The second objective of this section is to study under
what conditions the operator $\ST\in B(B(\Y, \X))$ satisfies
property $(gw)$, assuming that $A\in B(\X)$ and $B\in B(\Y)$
satisfy property $(gw)$. In particular, property $(gw)$ for
$\ST\in B(B(\Y, \X))$ will be related to the \it generalized
$a$-Weyl spectrum inclusion for \rm $\ST$, i.e.,
$$
 \sigma_a(A)\sigma_{SBF_+^-}(B^*)\cup\sigma_{SBF_+^-}(A)\sigma_a(B^*)\subseteq\sigma_{SBF_+^-}(\ST),
$$
where $B^*\in B(\Y^*)$ denotes the adjoint map of $B\in B(\Y)$ and $\Y^*$ stands for
the dual space of $\Y$.\par

\indent Note that the results concerning the left-right
multiplication operator can be proved using arguments similar to
the ones developed for the tensor product operator, so that, to
avoid repetition, only the proofs regarding property $(gw)$ for
the latter operator will be fully given. \par

\indent First of all, the relationships among the generalized $a$-Weyl spectrum inclusion and both the generalized $a$-Browder's theorem
 and property $(gw)$ will be studied.
To this end, set
$$
\mathbb{S}_a= \sigma_a(A)\sigma_{SBF_+^-}(B)\cup \sigma_{SBF_+^-}(A)\sigma_a(B),\hskip.3truecm
\mathbb{S}'_a= \sigma_a(A)\sigma_{SBF_+^-}(B^*)\cup \sigma_{SBF_+^-}(A)\sigma_a(B^*).
$$

\begin{thm}\label{thm15}Let $\X$ and $\Y$ be two Banach spaces and consider
$A\in B(\X)$ and $B\in B(\Y)$ such that $A$
 satisfies generalized $a$-Browder's theorem.\par
\noindent (a)  If $B\in B(\Y)$ satisfies generalized
$a$-Browder's theorem and the generalized $a$-Weyl spectrum
inclusion for $A\otimes B$ holds, then $A\otimes B\in B(\X
\overline{\otimes} \Y)$ satisfies generalized $a$-Browder's
theorem.\par \noindent (b)  If $B^*\in B(\Y^*)$ satisfies
generalized $a$-Browder's theorem and the generalized $a$-Weyl
spectrum inclusion for $\ST$ holds, then $\ST\in B(B(\Y, \X))$
satisfies generalized $a$-Browder's theorem.

\end{thm}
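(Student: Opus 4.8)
The plan is to reduce everything to the accumulation-point characterization of generalized $a$-Browder's theorem already invoked in the proof of Proposition~\ref{pro3}: by \cite[Theorem 2.8]{BK}, an operator $T$ on a Banach space satisfies generalized $a$-Browder's theorem if and only if $\acc\,\sigma_a(T)\subseteq\sigma_{SBF_+^-}(T)$. Thus the hypotheses of (a) read $\acc\,\sigma_a(A)\subseteq\sigma_{SBF_+^-}(A)$ and $\acc\,\sigma_a(B)\subseteq\sigma_{SBF_+^-}(B)$, and it suffices to prove $\acc\,\sigma_a(A\otimes B)\subseteq\sigma_{SBF_+^-}(A\otimes B)$. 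To transport the information from $A$ and $B$ to $A\otimes B$ I would use the spectral identity $\sigma_a(A\otimes B)=\sigma_a(A)\,\sigma_a(B)$ together with the standing hypothesis $\mathbb{S}_a\subseteq\sigma_{SBF_+^-}(A\otimes B)$.

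The bridge between these ingredients is a purely set-theoretic fact about products of compact subsets of $\mathbb C$: if $S,T\subseteq\mathbb C$ are compact, then $\acc\,(ST)\subseteq(\acc\,S)\,T\cup S\,(\acc\,T)$. Indeed, let $\mu\in\acc\,(ST)$ and pick $s_kt_k\in ST$ with $s_k\in S$, $t_k\in T$, $s_kt_k\to\mu$ and $s_kt_k\neq\mu$; by compactness, after passing to a subsequence, $s_k\to s\in S$ and $t_k\to t\in T$, whence $st=\mu$. If $s$ were an isolated point of $S$ and $t$ an isolated point of $T$, then $s_k=s$ and $t_k=t$ for all large $k$, so $s_kt_k=\mu$ eventually, contradicting $s_kt_k\neq\mu$; hence $s\in\acc\,S$ or $t\in\acc\,T$, and correspondingly $\mu=st$ lies in $(\acc\,S)\,T$ or in $S\,(\acc\,T)$.

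Combining these,
$$
\acc\,\sigma_a(A\otimes B)\subseteq\acc\,(\sigma_a(A)\,\sigma_a(B))\subseteq(\acc\,\sigma_a(A))\,\sigma_a(B)\cup\sigma_a(A)\,(\acc\,\sigma_a(B)),
$$
and, by generalized $a$-Browder's theorem for $A$ and for $B$ in the form above, this last set is contained in $\sigma_{SBF_+^-}(A)\,\sigma_a(B)\cup\sigma_a(A)\,\sigma_{SBF_+^-}(B)=\mathbb{S}_a\subseteq\sigma_{SBF_+^-}(A\otimes B)$. Hence $\acc\,\sigma_a(A\otimes B)\subseteq\sigma_{SBF_+^-}(A\otimes B)$, i.e.\ $A\otimes B$ satisfies generalized $a$-Browder's theorem, which proves (a). Part (b) is then obtained by the same argument verbatim, with $A\otimes B$ replaced by $\ST$, $B$ by $B^*$, the identity $\sigma_a(A\otimes B)=\sigma_a(A)\,\sigma_a(B)$ by $\sigma_a(\ST)=\sigma_a(A)\,\sigma_a(B^*)$, and $\mathbb{S}_a$ by $\mathbb{S}'_a$, using generalized $a$-Browder's theorem for $A$ and for $B^*$.

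I expect the one point really requiring attention to be the spectral identity $\sigma_a(A\otimes B)=\sigma_a(A)\,\sigma_a(B)$ (and its counterpart $\sigma_a(\ST)=\sigma_a(A)\,\sigma_a(B^*)$): the inclusion $\sigma_a(A)\,\sigma_a(B)\subseteq\sigma_a(A\otimes B)$ is routine---tensor approximate eigenvectors and use that the cross-norm is reasonable---but the reverse inclusion, which is in fact all that the argument needs since we only bound $\acc\,\sigma_a(A\otimes B)$ from above, is the genuine spectral-mapping statement and would be quoted from the literature on tensor products and elementary operators, cf.\ \cite{D,DDK}. The ostensible special cases---$\mu=0$, or one of the factor spectra reducing to a point---need no separate discussion: they are already absorbed in the compactness/subsequence step of the lemma, which makes no use of the value of $\mu$.
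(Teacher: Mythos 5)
Your proof is correct and follows essentially the same route as the paper's: reduce to the characterization ``generalized $a$-Browder's theorem $\Leftrightarrow \acc\,\sigma_a(T)\subseteq\sigma_{SBF_+^-}(T)$'' from \cite[Theorem 2.8]{BK}, invoke $\sigma_a(A\otimes B)=\sigma_a(A)\sigma_a(B)$, and push $\acc\,\sigma_a(A\otimes B)$ into $\mathbb{S}_a\subseteq\sigma_{SBF_+^-}(A\otimes B)$, with the analogous substitutions for $\ST$. The only divergence is that you prove the inclusion $\acc\,(ST)\subseteq(\acc\,S)\,T\cup S\,(\acc\,T)$ for compact sets by an elementary (and correct) subsequence argument, where the paper simply quotes \cite[Theorem 6]{HK}.
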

\begin{proof} (a). Note that according to \cite[Remark 2.7 and Theorem 2.8]{BK},
generalized $a$-Browder's theorem holds for $A\in B(\X)$
(respectively for $B\in B(\Y)$) if and only if acc
$\sigma_a(A)\subseteq \sigma_{SBF_+^-}(A)$ (respectively acc
$\sigma_a(B)\subseteq \sigma_{SBF_+^-}(B)$). As a result, the
generalized $a$-Weyl spectrum inclusion for $A\otimes B$ implies
that
$$
\sigma_a(A)(\hbox{\rm acc }\sigma_a(B))\cup (\hbox{\rm acc }\sigma_a(A))\sigma_a(B)\subseteq \sigma_{SBF_+^-}(A\otimes B).
$$
However, according to  \cite[Theorem 6]{HK} and to the fact that  $\sigma_a(A\otimes B)=\sigma_a(A)\sigma_a(B)$
(\cite[Theorem 4.4]{I}),
$$
\hbox{\rm acc } \sigma_a(A\otimes B)\subseteq \sigma_a(A)(\hbox{\rm acc }\sigma_a(B))\cup (\hbox{\rm acc }
\sigma_a(A))\sigma_a(B)\subseteq \sigma_{SBF_+^-}(A\otimes B).
$$
Therefore,  generalized $a$-Browder's theorem holds for $A\otimes B$.\par
\noindent (b). Adapt the proof of (a) using in particular that $\sigma_a(\ST)=\sigma_a(A)\sigma_a(B^*)$
(\cite[Proposition 4.3(i)]{BDJ}).
\end{proof}

\begin{rema}\label{rem17} \rm Note that the converse implication of Theorem \ref{thm15} does not in general hold.
In fact, if $A\in B( \X)$ is nilpotent, then $A\otimes B\in B(\X
\overline{\otimes}  \Y)$ is nilpotent for every $B\in B(\Y)$. In
addition, according to \cite[Theorem 4]{K}, $\Pi^l(A)=\{0\}=\Pi^l
(A\otimes B)$. As a result,  $A$ and $A\otimes B$ satisfy generalized
$a$-Browder's theorem and
$\sigma_{SBF_+^-}(A)=\emptyset=\sigma_{SBF_+^-}(A\otimes B)$. Let
$B\in B(\Y)$ be such that generalized $a$-Browder's theorem holds
for $B$ and $\sigma_{SBF_+^-}(B)\neq \emptyset$. Then, since
$\mathbb{S}_a=\{0\}$, the $a$-Weyl spectrum inclusion for
$A\otimes B$ does not hold.\par \indent A similar example can be
produced for the left-right multiplication operator.
\end{rema}

\begin{thm}\label{thm18}Let $\X$ and $\Y$ be two Banach spaces and consider
$A\in B(\X)$ and $B\in B(\Y)$ such that $A$ and $B$ are isoloid
operators and $A$ satisfies property $(gw)$. \par \noindent (a)
If $B\in B(\Y)$ satisfies property $(gw)$ and  the generalized
$a$-Weyl spectrum inclusion for $A\otimes B$ holds, then
$A\otimes B\in B(\X  \overline{\otimes}  \Y)$ satisfies property
$(gw)$. \par \noindent (b) If $B^*\in B(\Y^*)$ satisfies property
$(gw)$ and  the generalized $a$-Weyl spectrum inclusion for $\ST$
holds, then $\ST\in B(B(\Y, \X))$ satisfies property $(gw)$.

\end{thm}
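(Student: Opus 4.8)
The plan is to reduce property $(gw)$ for $A\otimes B$ to a combination of (i) generalized $a$-Browder's theorem for $A\otimes B$, which is already supplied by Theorem \ref{thm15}(a), and (ii) the equality $E(A\otimes B)=\Pi^l(A\otimes B)$, which by Proposition \ref{pro3}(i)$\Leftrightarrow$(ii) is exactly what is missing. So after invoking Theorem \ref{thm15}(a) to get generalized $a$-Browder's theorem for $A\otimes B$, the whole burden is to prove $E(A\otimes B)=\Pi^l(A\otimes B)$. The inclusion $\Pi^l(A\otimes B)\subseteq E(A\otimes B)$ is the easy direction: a left pole lies in $\iso\sigma_a(A\otimes B)$ and has nonzero nullity, and one checks it is in fact isolated in $\sigma(A\otimes B)$ (using, as in the proof of Proposition \ref{pro3}(i)$\Rightarrow$(iv), that left poles of an operator satisfying generalized $a$-Browder's theorem whose $E$-set sits in $\iso\sigma$ coincide with poles); more directly, since $A,B$ are isoloid and satisfy $(gw)$, Lemma \ref{lem16} makes them polaroid, and one shows $A\otimes B$ is polaroid too, whence $\iso\sigma(A\otimes B)=\Pi(A\otimes B)\subseteq\Pi^l(A\otimes B)$, which will also feed the reverse inclusion.

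The reverse inclusion $E(A\otimes B)\subseteq\Pi^l(A\otimes B)$ is where the isoloid hypothesis is used. Take $\mu\in E(A\otimes B)$, so $\mu\in\iso\sigma(A\otimes B)$ and $\alpha(A\otimes B-\mu)>0$. Using the spectral identity $\sigma(A\otimes B)=\sigma(A)\sigma(B)$ and the standard fact that an isolated point $\mu$ of a product spectrum is a product $\mu=\lambda_A\lambda_B$ of spectral points with $\lambda_A\in\iso\sigma(A)$ and $\lambda_B\in\iso\sigma(B)$ (one must also dispose of the case $\mu=0$, handled via nilpotent/quasinilpotent parts as in Remark \ref{rem17}), isoloidness of $A$ and $B$ gives $\lambda_A\in E(A)$ and $\lambda_B\in E(B)$. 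Since $A$ and $B$ satisfy $(gw)$, Remark \ref{rem4} yields $\lambda_A\in\Pi(A)=\Pi^l(A)$ and $\lambda_B\in\Pi(B)=\Pi^l(B)$, i.e.\ $A-\lambda_A$ and $B-\lambda_B$ are Drazin invertible. Then one invokes the fact that the tensor product of two Drazin invertible operators, after passing to the relevant Riesz decomposition around $\mu$, produces $A\otimes B-\mu$ acting as (invertible)$\oplus$(nilpotent) on $\X\overline{\otimes}\Y=\X_1\overline{\otimes}\Y_1\oplus(\cdots)$, so $\mu\in\Pi(A\otimes B)\subseteq\Pi^l(A\otimes B)$; this is precisely the place where polaroidness of $A\otimes B$ (obtained above) can be quoted to finish quickly.

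The main obstacle I expect is the spectral bookkeeping at $\mu=0$ and, more generally, showing that $A\otimes B$ is polaroid — i.e.\ that an isolated point of $\sigma(A)\sigma(B)$ decomposes as a product of poles of $A$ and $B$ and that the resulting local decomposition really makes $A\otimes B-\mu$ Drazin invertible on a Banach (not Hilbert) tensor product, where the Riesz projections associated to $\lambda_A$ and $\lambda_B$ must be tensored and the completion respected. All the other steps are either direct citations (Theorem \ref{thm15}(a), Proposition \ref{pro3}, Remark \ref{rem4}, Lemma \ref{lem16}, the spectral identities $\sigma(A\otimes B)=\sigma(A)\sigma(B)$ and $\sigma_a(A\otimes B)=\sigma_a(A)\sigma_a(B)$) or the easy polaroid$\Rightarrow$($E=\Pi^l$) bookkeeping. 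Part (b) for $\ST=L_AR_B$ is obtained by the same argument with $B$ replaced throughout by $B^*$, using $\sigma(\ST)=\sigma(A)\sigma(B^*)$, $\sigma_a(\ST)=\sigma_a(A)\sigma_a(B^*)$, and Theorem \ref{thm15}(b) in place of Theorem \ref{thm15}(a); no new idea is needed, only the observation that $B$ isoloid forces the relevant isolated spectral points of $B^*$ to be poles via $(gw)$ for $B^*$.
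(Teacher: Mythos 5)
There is a genuine gap: your argument never establishes the inclusion $\Pi^l(A\otimes B)\subseteq E(A\otimes B)$, and that inclusion is the heart of the matter. What your outline actually delivers (via Theorem \ref{thm15}, Lemma \ref{lem16}, polaroidness of $A\otimes B$, and isoloidness) is generalized $a$-Browder's theorem for $A\otimes B$ together with $E(A\otimes B)=\iso\sigma(A\otimes B)=\Pi(A\otimes B)\subseteq \Pi^l(A\otimes B)$ --- only one of the two inclusions needed for $E(A\otimes B)=\Pi^l(A\otimes B)$. Your justification of the other one (``a left pole \dots one checks it is in fact isolated in $\sigma(A\otimes B)$'') is circular: the argument you borrow from Proposition \ref{pro3}(i)$\Rightarrow$(iv) invokes Lemma \ref{lem1} only after one already knows $\Pi^l=E\subseteq\iso\sigma$, i.e.\ after property $(gw)$ is known for the operator in question. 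By Lemma \ref{lem1} a left pole which is not a pole lies in $\acc\,\sigma(A\otimes B)$, so polaroidness says nothing about it; and polaroid together with generalized $a$-Browder's theorem does not force $\Pi^l=\Pi$ (for instance $U\oplus 0$, $U$ the unilateral shift, has $0\in\Pi^l\setminus\Pi$, is vacuously polaroid, satisfies generalized $a$-Browder's theorem, and fails $(gw)$).

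This missing step is exactly where the paper makes a second, essential use of the generalized $a$-Weyl spectrum inclusion (your proposal uses that hypothesis only once, inside Theorem \ref{thm15}): since $\sigma_a(A\otimes B)=\sigma_a(A)\sigma_a(B)$ and $A$, $B$, $A\otimes B$ all satisfy generalized $a$-Browder's theorem, the inclusion forces every factorization $\lambda=\mu\nu$ of a point $\lambda\in\Pi^l(A\otimes B)=\sigma_a(A\otimes B)\setminus\sigma_{SBF_+^-}(A\otimes B)$ to have $\mu\in\sigma_a(A)\setminus\sigma_{SBF_+^-}(A)=\Pi^l(A)=\Pi(A)$ and $\nu\in\Pi^l(B)=\Pi(B)$, whence $\Pi^l(A\otimes B)\subseteq\Pi(A)\Pi(B)$; a case analysis at $0$, using $\Pi(A\otimes B)\setminus\{0\}=(\Pi(A)\setminus\{0\})(\Pi(B)\setminus\{0\})$ from \cite{HK} and the algebraic/Drazin-invertibility considerations, then yields $\Pi^l(A\otimes B)\subseteq\Pi(A\otimes B)$. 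Without some such argument your outline proves at most $E(A\otimes B)\subseteq\Delta^g_a(A\otimes B)$, not the equality that defines property $(gw)$. (By contrast, the point you flag as the main obstacle --- polaroidness of $A\otimes B$ on a Banach tensor product --- is not one: the paper simply quotes \cite{DHK} for it.)
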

\begin{proof} (a). According to \cite[Theorem 2.6]{ABe} and Theorem \ref{thm15},
generalized $a$-Browder's theorem holds for $A\otimes B$.
Furthermore, since $A$ and $B$ are polaroid, see Lemma
\ref{lem16}, $A\otimes B$ is polaroid \cite{DHK}. Observe that
$A$ and $B$ isoloid implies that $A\otimes B$ is isoloid. Hence
$E(A\otimes B)=\Pi (A\otimes B)\subseteq \Pi^l (A\otimes B)$.
Consequently, to conclude the proof it would suffice to prove that
$ \Pi^l (A\otimes B)\subseteq \Pi (A\otimes B)$.\par
\markright{ \hskip5truecm \rm ENRICO BOASSO and B. P. DUGGAL}
\indent Since $\sigma_a(A\otimes B)=\sigma_a(A)\sigma_a (B)$
 and $A$, $B$ and $A\otimes B$ satisfy
generalized $a$-Browder's theorem, according to the generalized
$a$-Weyl spectrum inclusion for $A\otimes B$ and Remark
\ref{rem4},
\begin{align*}
\Pi^l(A\otimes B)&=\sigma_a(A\otimes B)\setminus \sigma_{SBF_+^-}(A\otimes B)\subseteq (\sigma_a(A)\setminus \sigma_{SBF_+^-}(A)) (\sigma_a(B)\setminus \sigma_{SBF_+^-}(B)) \\
&=\Pi^l(A)\Pi^l(B)=\Pi (A)\Pi (B).\\
\end{align*}

\noindent Now well, some cases must be considered. \par

\indent If $0\notin \Pi (A)\Pi (B)$, then, according to \cite[Theorem 6]{HK},
 $\Pi^l(A\otimes B)\subseteq \Pi (A)\Pi (B)=(\Pi (A)\setminus\{ 0\})(\Pi (B)\setminus\{ 0\})=\Pi (A\otimes B)\setminus\{ 0\}\subseteq \Pi (A\otimes B)$.
On the other hand, if $0\in \Pi (A)\cap \Pi (B)$, equivalently, if $A$ and $B$ are Drazin invertible, then
it is not difficult to prove that $A\otimes B$ is Drazin invertible, so that $0\in \Pi (A\otimes B)$.
Consequently, since $\Pi^l(A\otimes B)\setminus \{0\}\subseteq(\Pi (A)\setminus\{ 0\})(\Pi (B)\setminus\{ 0\})=\Pi (A\otimes B)\setminus\{0\}$, 
 $\Pi^l(A\otimes B)\subseteq  \Pi (A\otimes B)$.\par

\indent Next suppose that $0\in \Pi (A)$ and $0\notin\Pi (B)$ (the case
 $0\in \Pi (B)$ and $0\notin\Pi (A)$ can be proved interchanging $A$ with $B$).
If $ \sigma_{SBF_+^-}(B)\neq\emptyset$, then, according to the generalized
$a$-Weyl spectrum inclusion for $A\otimes B$, 
$0\notin \Pi^l(A\otimes B)$. As a result, $ \Pi^l(A\otimes B)\subseteq 
(\Pi (A)\setminus\{ 0\})(\Pi (B)\setminus\{ 0\})=\Pi (A\otimes B)\setminus\{ 0\}\subseteq \Pi (A\otimes B)$.
Finally, suppose that $0\in \Pi (A)$ and $0\notin \Pi(B)=\sigma_a(B)$. Clearly, $0\in\sigma_a(A\otimes B)
\subseteq \sigma (A\otimes B)$. If $0\notin \Pi (A\otimes B)$, then since $A\otimes B$ is polaroid,
$0\in $ acc $\sigma (A\otimes B)$. In particular, $0\in $ acc $\sigma (A)$ or $0\in $ acc $\sigma (B)$.
However, $0\in \Pi (A)$ and since  $ \sigma_{LD}(B)=\sigma_{SBF_+^-}(B)=\emptyset$, according to
\cite[Theorem 2.7]{BBO}, $B$ is algebraic, which implies that $\sigma (B)=\Pi (B)$. Consequently,
$0\in\Pi (A\otimes B)$ and as before $ \Pi^l(A\otimes B)\subseteq \Pi (A\otimes B)$.\par
\noindent (b). Adapt the proof of (a) using in particular that $\sigma_a(\ST )=\sigma_a (A)\sigma_a (B^*)$, 
$\ST\in B(B(\Y ,\X))$ is polaroid (\cite[Lemma 4.7]{BDJ}), $\Pi (B)=\Pi (B^*)$ (\cite[Theorem 2.8]{AS}),
and that $B$ is Drazin invertible (respectively algebraic) if and only if $B^*$ is Drazin invertible (respectively algebraic).
\end{proof}
\markright{ \hskip5truecm \rm Property $(gw)$}
\indent As it will be shown, the transfer property for property
$(gw)$ does not in general imply the generalized $a$-Weyl
spectrum inclusion, both for the tensor product operator and the
left-right multiplication operator. To characterize when this
implication holds, some preparation is needed. \par

\indent In first place, given a Banach space $\X$ and $A\in B(\X)$,
set $I^l(A)=$ (iso $\sigma_a(A)$)$\setminus \Pi^l(A)$. In addition, if $\Y$ is another Banach space and
$B\in B(\Y)$, set
$$
{\mathbb L}_a=(I^l(A)\setminus \{0\})(I^l(B)\setminus \{0\})\cup (I^l(A)\setminus \{0\})(\Pi^l(B)\setminus \{0\})
\cup (\Pi^l(A)\setminus \{0\})(I^l(B)\setminus \{0\}),
$$
$$
{\mathbb L}'_a=(I^l(A)\setminus \{0\})(I^l(B^*)\setminus \{0\})\cup (I^l(A)\setminus \{0\})(\Pi^l(B^*)\setminus \{0\})
\cup (\Pi^l(A)\setminus \{0\})(I^l(B^*)\setminus \{0\}).
$$
\indent Furthermore, note that according to \cite[Theorem
4.4]{I}, \cite[Proposition 4.3(i)]{BDJ} and \cite[Theorem 6]{HK},
$$
\hbox{\rm iso }\sigma_a (A\otimes B)\setminus \{0\}={\mathbb L}_a\cup  (\Pi^l(A)\setminus \{0\})(\Pi^l(B)\setminus \{0\}),
$$
$$
\hbox{\rm iso }\sigma_a (\ST)\setminus \{0\}={\mathbb L}'_a\cup  (\Pi^l(A)\setminus \{0\})(\Pi^l(B^*)\setminus \{0\}).\\
$$

\indent In the following proposition the non-null isolated points
of the approximate point spectrum both of the tensor product
operator and of the left-right multiplication operator will be
described in a particular case.\par

\begin{pro}\label{pro19} Let $\X$ and $\Y$ be two Banach spaces and consider
$A\in B(\X)$ and $B\in B(\Y)$ such that $A$ and $B$ are isoloid operators and $A$
satisfies property $(gw)$. \par
\noindent (a) If $B$ and $A\otimes B\in   B(\X  \overline{\otimes}  \Y)$ satisfy property $(gw)$, then\par
\noindent (i) $\Pi^l(A\otimes B)\setminus \{0\}=(\Pi^l (A)\setminus \{0\})(\Pi^l (B)\setminus \{0\})=(\Pi (A)\setminus \{0\})(\Pi (B)\setminus \{0\}).$\par
\noindent (ii) $ I^l(A\otimes B)\setminus \{0\}={\mathbb L}_a$.\par
\noindent (b) If $B^*$ and $\ST \in   B(B(  \Y,\X))$ satisfy property $(gw)$, then\par
\noindent (iii) $\Pi^l(\ST)\setminus \{0\}=(\Pi^l (A)\setminus \{0\})(\Pi^l (B^*)\setminus \{0\})=(\Pi (A)\setminus \{0\})(\Pi (B^*)\setminus \{0\}).$\par
\noindent (iv) $ I^l(\ST)\setminus \{0\}={\mathbb L}'_a$.\par
\end{pro}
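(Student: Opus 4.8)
The plan is to prove part (a); part (b) follows by the same argument using the substitutions indicated at the end of Theorem \ref{thm18}'s proof (replace $B$ by $B^*$, use $\sigma_a(\ST)=\sigma_a(A)\sigma_a(B^*)$, use that $\ST$ is polaroid by \cite[Lemma 4.7]{BDJ}, and that $\Pi(B)=\Pi(B^*)$ by \cite[Theorem 2.8]{AS}). For (a), first observe that the hypotheses are exactly those of Theorem \ref{thm18}(a): $A$ and $B$ are isoloid with property $(gw)$, and since $A\otimes B$ is assumed to satisfy property $(gw)$ we are in a position to invoke the structural facts established in that proof. In particular, by Theorem \ref{thm18}(a) combined with Lemma \ref{lem16} and \cite{DHK}, $A\otimes B$ is polaroid and isoloid, and by Remark \ref{rem4} (applied to $A$, $B$, and $A\otimes B$) one has $E=\Pi=\Pi^l$ for each of these operators. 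These identities are the engine of the whole argument.

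For (i): the inclusion $\Pi^l(A\otimes B)\setminus\{0\}\subseteq(\Pi^l(A)\setminus\{0\})(\Pi^l(B)\setminus\{0\})$ is obtained as in the proof of Theorem \ref{thm18}(a) — write $\Pi^l(A\otimes B)=\sigma_a(A\otimes B)\setminus\sigma_{SBF_+^-}(A\otimes B)$ using generalized $a$-Browder's theorem for $A\otimes B$, then use $\sigma_a(A\otimes B)=\sigma_a(A)\sigma_a(B)$ together with \cite[Theorem 6]{HK} (which controls the factorization $\sigma_{SBF_+^-}(A\otimes B)=\sigma_a(A)\sigma_{SBF_+^-}(B)\cup\sigma_{SBF_+^-}(A)\sigma_a(B)$, under generalized $a$-Browder's theorem for all three operators) to peel off a nonzero point as a product of a nonzero left pole of $A$ and one of $B$. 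The reverse inclusion: if $\mu=\alpha\beta$ with $\alpha\in\Pi^l(A)\setminus\{0\}$, $\beta\in\Pi^l(B)\setminus\{0\}$, then $\alpha\in\Pi(A)$, $\beta\in\Pi(B)$ by Remark \ref{rem4}, so $A-\alpha$ and $B-\beta$ are Drazin invertible, hence (as in Theorem \ref{thm18}) $A\otimes B-\mu$ is Drazin invertible when $\mu\neq 0$, giving $\mu\in\Pi(A\otimes B)\subseteq\Pi^l(A\otimes B)$ — here I use \cite[Theorem 6]{HK} to see that a nonzero point of $\sigma(A\otimes B)$ splits as a product of a point of $\sigma(A)$ and a point of $\sigma(B)$, so no spurious contributions arise. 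The middle equality $(\Pi^l(A)\setminus\{0\})(\Pi^l(B)\setminus\{0\})=(\Pi(A)\setminus\{0\})(\Pi(B)\setminus\{0\})$ is immediate from $\Pi=\Pi^l$ for $A$ and $B$.

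For (ii): combine the displayed identity
$$
\hbox{\rm iso }\sigma_a(A\otimes B)\setminus\{0\}={\mathbb L}_a\cup(\Pi^l(A)\setminus\{0\})(\Pi^l(B)\setminus\{0\})
$$
stated just before the proposition with the definition $I^l(A\otimes B)=(\hbox{iso }\sigma_a(A\otimes B))\setminus\Pi^l(A\otimes B)$ and with (i). Removing $\Pi^l(A\otimes B)\setminus\{0\}=(\Pi^l(A)\setminus\{0\})(\Pi^l(B)\setminus\{0\})$ from the right-hand side leaves ${\mathbb L}_a$, provided ${\mathbb L}_a$ is disjoint from $(\Pi^l(A)\setminus\{0\})(\Pi^l(B)\setminus\{0\})$; this is the step I expect to be the main obstacle, since a priori a point in, say, $(I^l(A)\setminus\{0\})(\Pi^l(B)\setminus\{0\})$ could accidentally also be a product of two left poles. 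The resolution uses the factorization rigidity of \cite[Theorem 6]{HK}: a nonzero point $\mu$ of $\sigma_a(A\otimes B)$ lies in $\Pi^l(A\otimes B)$ iff $A\otimes B-\mu$ is left Drazin invertible, and by the $a$-Weyl spectrum identity and \cite[Theorem 6]{HK} the only way this happens is that \emph{every} factorization $\mu=\alpha\beta$ with $\alpha\in\sigma_a(A)$, $\beta\in\sigma_a(B)$ has $\alpha\in\Pi^l(A)$ and $\beta\in\Pi^l(B)$; since the $I^l$-factor in ${\mathbb L}_a$ is by definition an isolated point of the relevant approximate spectrum that is \emph{not} a left pole, such a $\mu$ cannot be a left pole of $A\otimes B$. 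I would spell this out by checking that a nonzero isolated point of $\sigma_a(A\otimes B)$ has only finitely many factorizations and that membership in $\sigma_{SBF_+^-}(A\otimes B)$ is detected by the presence of a "bad" factor, so that ${\mathbb L}_a\subseteq\sigma_{SBF_+^-}(A\otimes B)$ while $(\Pi^l(A)\setminus\{0\})(\Pi^l(B)\setminus\{0\})\cap\sigma_{SBF_+^-}(A\otimes B)=\emptyset$. Granting this disjointness, (ii) drops out, and the proof of (a) is complete; (b) is then verbatim with the dictionary above.
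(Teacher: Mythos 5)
Your overall skeleton (polaroidness of $A\otimes B$, Remark \ref{rem4}, \cite[Theorem 6]{HK}, and the displayed decomposition of $\hbox{iso }\sigma_a(A\otimes B)\setminus\{0\}$) matches the paper's, but the two load-bearing steps rest on a premise that is neither available nor correct at this point. For the forward inclusion in (i) and again for the disjointness in (ii) you invoke the factorization $\sigma_{SBF_+^-}(A\otimes B)=\sigma_a(A)\sigma_{SBF_+^-}(B)\cup\sigma_{SBF_+^-}(A)\sigma_a(B)$, attributing it to \cite[Theorem 6]{HK} ``under generalized $a$-Browder's theorem for all three operators.'' That theorem concerns isolated points of the spectrum and poles of tensor products; it says nothing about the upper $B$-Weyl spectrum. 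The inclusion $\mathbb{S}_a\subseteq\sigma_{SBF_+^-}(A\otimes B)$ is precisely the ``generalized $a$-Weyl spectrum inclusion'' whose validity is the subject of the whole paper: it is \emph{not} a hypothesis of Proposition \ref{pro19}, and it genuinely fails under these hypotheses (e.g.\ $A$ nilpotent and $\sigma_{SBF_+^-}(B)\neq\emptyset$, as in Remark \ref{rem17} and Theorem \ref{thm20}(ii)). The failure there is only at $0$, and indeed $\mathbb{S}_a\setminus\{0\}=\sigma_{SBF_+^-}(A\otimes B)\setminus\{0\}$ does hold --- but in the paper that identity is \emph{derived from} Proposition \ref{pro19}(ii) (see the proofs of Theorems \ref{thm22} and \ref{thm21}), so using it here is circular.

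Both steps have much shorter correct proofs that avoid $\sigma_{SBF_+^-}$ entirely. For (i): $A\otimes B$ is polaroid and satisfies $(gw)$, so by Remark \ref{rem4} $\Pi^l(A\otimes B)=\Pi(A\otimes B)$, and \cite[Theorem 6]{HK} applied to the polaroid operators $A$, $B$, $A\otimes B$ gives $\Pi(A\otimes B)\setminus\{0\}=(\Pi(A)\setminus\{0\})(\Pi(B)\setminus\{0\})$; Remark \ref{rem4} for $A$ and $B$ finishes. For the disjointness in (ii): since $A$ is polaroid, $\hbox{iso }\sigma(A)=\Pi(A)\subseteq\Pi^l(A)$, so $I^l(A)\subseteq\hbox{acc }\sigma(A)$ (and likewise for $B$); hence every point of $\mathbb{L}_a$ lies in $\hbox{acc }\sigma(A\otimes B)$, whereas $\Pi^l(A\otimes B)\setminus\{0\}=\Pi(A\otimes B)\setminus\{0\}\subseteq\hbox{iso }\sigma(A\otimes B)$. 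No counting of factorizations or detection of ``bad factors'' is needed. Your reverse inclusion in (i) and the reduction of (b) to (a) are fine.
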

\begin{proof} (a). Note that according to the proof of Theorem \ref{thm18},  $A\otimes B\in B(\X  \overline{\otimes}  \Y)$
is polaroid. Consequently, according to Remark \ref{rem4} and \cite[Theorem 6]{HK},
$$
\Pi^l(A\otimes B)\setminus \{0\}=
\Pi (A\otimes B)\setminus \{0\}=(\Pi (A)\setminus \{0\})(\Pi (B)\setminus \{0\})
=(\Pi^l (A)\setminus \{0\})(\Pi^l (B)\setminus \{0\}).
$$

\indent On the other hand, note that since, according to Lemma
\ref{lem16}, iso $\sigma (A)=\Pi (A)\subseteq \Pi^l(A)$,
$I^l(A)\subseteq $ acc $\sigma(A)$. Similarly, $I^l(B)\subseteq $
acc $\sigma(B)$. As a result, ${\mathbb L}_a\subseteq$ acc
$\sigma (A\otimes B)$. In particular, according to what has been
proved, ${\mathbb L}_a\cap ( \Pi^l(A\otimes B)\setminus
\{0\})=\emptyset$. Hence, since
 iso $\sigma_a (A\otimes B)\setminus \{0\}={\mathbb L}_a\cup (\Pi^l(A\otimes B)\setminus \{0\})$,
statement (ii) holds.\par

\noindent (b). Adapt the proof of (a) using in particular that $\sigma_a(\ST)=\sigma_a(A)\sigma_a(B^*)$, 
$\ST\in B(B(\Y, \X))$ is polaroid 
and the fact that $B\in B(\Y)$ is polaroid if and only if $B^*\in B(\Y^*)$ is polaroid (\cite[Theorem 2.8]{AS}).
\end{proof}

\indent Note further that when
$A\in B(\X)$, $B\in B(\Y)$ and $A\otimes B\in B(\X\overline{\otimes}\Y)$ satisfy property $(gw)$, then
as the following proposition shows, the generalized $a$-Weyl
spectrum inclusion is indeed an equality. However, since for
the main objective of this article the relevant condition
consists in an inclusion, the generalized $a$-Weyl
spectrum inclusion instead of the corresponding equality will be
focused on.\par


 \begin{pro}\label{pro0} Let $\X$ and $\Y$ be two Banach spaces and consider  $A\in B(\X)$ and
$B\in B(\Y)$.\par \noindent (a) If $A$, $B$ and $A\otimes B$
satisfy property $(gw)$, then
$$ \sigma_{{SBF}_+^-}(A\otimes B)\subseteq \sigma_a(A)
\sigma_{{SBF}_+^-}(B)\cup \sigma_{{SBF}_+^-}(A)\sigma_a(B).$$
\noindent  (b) If $A$, $B^*$ and $\ST$ satisfy property $(gw)$, then
$$ \sigma_{{SBF}_+^-}(\ST)\subseteq \sigma_a(A)
\sigma_{{SBF}_+^-}(B^*)\cup \sigma_{{SBF}_+^-}(A)\sigma_a(B^*).$$

\end{pro}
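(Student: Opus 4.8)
The plan is to prove the contrapositive of the inclusion in (a): if $\lambda\in\C$ and $\lambda\notin\mathbb{S}_a$, then $\lambda\notin\sigma_{SBF_+^-}(A\otimes B)$. Since $\sigma_{SBF_+^-}(A\otimes B)\subseteq\sigma_a(A\otimes B)$, I may assume $\lambda\in\sigma_a(A\otimes B)=\sigma_a(A)\sigma_a(B)$ (\cite[Theorem 4.4]{I}). As $A\otimes B$ has property $(gw)$, it satisfies generalized $a$-Browder's theorem (Proposition \ref{pro3}), so $\sigma_{SBF_+^-}(A\otimes B)=\sigma_a(A\otimes B)\setminus\Pi^l(A\otimes B)$; hence it is enough to show $\lambda\in\Pi^l(A\otimes B)$, and in fact I will obtain $\lambda\in\Pi(A\otimes B)\subseteq\Pi^l(A\otimes B)$. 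The argument splits according to whether $\lambda\neq 0$ or $\lambda=0$.

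Assume first $\lambda\neq 0$ and write $\lambda=\mu\nu$ with $\mu\in\sigma_a(A)$ and $\nu\in\sigma_a(B)$; then $\mu\neq 0$ and $\nu\neq 0$. If $\mu\in\sigma_{SBF_+^-}(A)$, then $\lambda=\mu\nu\in\sigma_{SBF_+^-}(A)\sigma_a(B)\subseteq\mathbb{S}_a$, contradicting the hypothesis; hence $\mu\in\sigma_a(A)\setminus\sigma_{SBF_+^-}(A)=\Pi^l(A)=\Pi(A)$, the first equality by generalized $a$-Browder's theorem for $A$ (which holds since $A$ has property $(gw)$) and the second by Remark \ref{rem4}. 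Symmetrically $\nu\in\Pi(B)\setminus\{0\}$. Then \cite[Theorem 6]{HK} gives $\lambda=\mu\nu\in(\Pi(A)\setminus\{0\})(\Pi(B)\setminus\{0\})=\Pi(A\otimes B)\setminus\{0\}\subseteq\Pi^l(A\otimes B)$, which settles this case.

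The case $\lambda=0$ is the delicate point, and I would reduce it, exactly as the degenerate situations in the proof of Theorem \ref{thm18}, to a Drazin-invertibility statement. Since $0\in\sigma_a(A)\sigma_a(B)$, we have $0\in\sigma_a(A)$ or $0\in\sigma_a(B)$; assume $0\in\sigma_a(A)$, the other case following on interchanging $A$ and $B$. Because $0\in\sigma_a(A)$ and $\sigma_a(B)\neq\emptyset$, the hypothesis $0\notin\mathbb{S}_a$ forces $\sigma_{SBF_+^-}(B)=\emptyset$ and $0\notin\sigma_{SBF_+^-}(A)$. From $0\notin\sigma_{SBF_+^-}(A)$, generalized $a$-Browder's theorem for $A$ and Remark \ref{rem4} we get $0\in\Pi^l(A)=\Pi(A)$, so $A$ is Drazin invertible; from $\sigma_{SBF_+^-}(B)=\emptyset$ and generalized $a$-Browder's theorem for $B$ we get $\sigma_{LD}(B)=\sigma_{SBF_+^-}(B)=\emptyset$, so, by \cite[Theorem 2.7]{BBO} as in the proof of Theorem \ref{thm18}, $B$ is algebraic, hence also Drazin invertible. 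Since $A$ and $B$ are Drazin invertible, so is $A\otimes B$, whence $0\in\Pi(A\otimes B)\subseteq\Pi^l(A\otimes B)$ and $0\notin\sigma_{SBF_+^-}(A\otimes B)$. This completes the proof of (a).

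For (b) I would run the same argument with $\ST=L_AR_B$ in place of $A\otimes B$ and $B^*$ in place of $B$, using $\sigma_a(\ST)=\sigma_a(A)\sigma_a(B^*)$ (\cite[Proposition 4.3(i)]{BDJ}) and the analogue of \cite[Theorem 6]{HK} for $\ST$, together with $\Pi(B)=\Pi(B^*)$ and the fact that $B$ is Drazin invertible (respectively algebraic) if and only if $B^*$ is (\cite[Theorem 2.8]{AS}), and property $(gw)$ for $A$, $B^*$ and $\ST$. The only genuine obstacle throughout is the bookkeeping at $\lambda=0$; keeping the whole reduction to the single goal ``$\lambda\in\Pi^l(\cdot)$'' and isolating the Drazin/algebraic degeneracies as above is what keeps it under control.
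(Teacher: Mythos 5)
Your overall strategy coincides with the paper's: pass to the contrapositive, use generalized $a$-Browder's theorem for $A\otimes B$ (available since $A\otimes B$ satisfies property $(gw)$) to reduce everything to showing $\lambda\in\Pi^l(A\otimes B)$, and treat $\lambda\neq 0$ by factorization and $\lambda=0$ separately. For $\lambda=0$ your route is genuinely different from the paper's: instead of the four-case analysis through $E$-membership (where, e.g., $\sigma_{SBF_+^-}(A)=\emptyset$ forces $\sigma_a(A)=\Pi^l(A)=E(A)$ and one concludes $0\in E(A\otimes B)$), you reduce to Drazin invertibility --- $0\in\Pi^l(A)=\Pi(A)$ on one side, $\sigma_{LD}(B)=\sigma_{SBF_+^-}(B)=\emptyset$ hence $B$ algebraic by \cite[Theorem 2.7]{BBO} on the other --- and conclude $0\in\Pi(A\otimes B)\subseteq\Pi^l(A\otimes B)$ exactly as in the degenerate cases of Theorem \ref{thm18}. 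That is correct, covers all of the paper's cases (i)--(iv), and is if anything easier to verify than the paper's ``Hence $0\in E(A\otimes B)$''.

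The one step you must repair is in the case $\lambda\neq 0$. You fix a \emph{single} factorization $\lambda=\mu\nu$, obtain $\mu\in\Pi(A)\setminus\{0\}$ and $\nu\in\Pi(B)\setminus\{0\}$, and then invoke the set identity $(\Pi(A)\setminus\{0\})(\Pi(B)\setminus\{0\})=\Pi(A\otimes B)\setminus\{0\}$. The inclusion you actually need --- that a nonzero product of poles is a pole of $A\otimes B$ --- is not a general fact: $\mu\nu$ may admit another factorization through accumulation points of the spectra, in which case it is not even isolated in $\sigma(A)\sigma(B)$ (think of $\sigma(A)=\{2\}\cup[3,4]$ and $\sigma(B)=\{3\}\cup[3/2,8/5]$, where $6$ is a product of poles yet lies in $[3,4]\cdot[3/2,8/5]$). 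This is precisely why the paper's proof quantifies over \emph{all} factorizations: for every $\mu\in\sigma_a(A)$ and $\nu\in\sigma_a(B)$ with $\lambda=\mu\nu$ the factors avoid the respective upper $B$-Weyl spectra, hence lie in $\Pi^l=E$, and only then is \cite[Theorem 6]{HK} applied to place $\lambda$ in $E(A\otimes B)=\Pi^l(A\otimes B)$. The repair costs you nothing, because your hypothesis $\lambda\notin\mathbb{S}_a$ gives the conclusion for every such factorization, not just the chosen one; but the argument should be stated with that quantifier rather than through the set identity. With this rewording, and the analogous adjustments you already indicate for $\ST$ in part (b), the proof is in order.
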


\begin{proof} (a). If $0\notin
(\sigma_a(A)\sigma_{{SBF}_+^-}(B)\cup \sigma_{{SBF}_+^-}(A)\sigma_a(B))$, then either

\noindent (i) $0\notin\sigma_a(A\otimes B)=\sigma_a(A)\sigma_a(B)$, or

\noindent (ii) $0\notin\sigma_a(A)$ and
$0\in(\sigma_a(B)\setminus \sigma_{{SBF}_+^-}(B))$ and
$\sigma_{{SBF}_+^-}(A)=\emptyset$, or
\markright{ \hskip5truecm \rm ENRICO BOASSO and B. P. DUGGAL}
\noindent (iii) $0\notin\sigma_a(B)$ and
$0\in(\sigma_a(A)\setminus \sigma_{{SBF}_+^-}(A))$ and
$\sigma_{{SBF}_+^-}(B)=\emptyset$, or

\noindent (iv) $0\in \sigma_a(A)\cap \sigma_a(B)$
and $\sigma_{{SBF}_+^-}(A)=\sigma_{{SBF}_+^-}(B)=\emptyset$.

\noindent Evidently, $0\notin  \sigma_{{SBF}_+^-}(A\otimes B)$, if
(i) holds.  If (ii) holds, then $\sigma_{{SBF}_+^-}(A)=\emptyset$
implies that $\sigma_a(A)=\Pi^l(A)=E(A)$ and
$0\in(\sigma_a(B)\setminus\sigma_{{SBF}_+^-}(B))$ implies that $0\in\Pi^l(B)=E(B)$ (\cite[Theorem 2.6]{ABe}).
Hence $0\in  E(A\otimes B)=\Pi^l(A\otimes B)=\sigma_a(A\otimes B)\setminus\sigma_{{SBF}_+^-}(A\otimes B)$.
Similarly it is seen that
$0\notin \sigma_{{SBF}_+^-}(A\otimes B)$ in case (iii) or (iv) holds.
Conclusion: $0\notin(\sigma_{{SBF}_+^-}(A)\sigma_a(B) \cup
\sigma_a(A) \sigma_{{SBF}_+^-}(B))$ implies that $0\notin
\sigma_{{SBF}_+^-}(A\otimes B)$. \par

\indent Now let $(0\neq) \lambda\in\sigma_a(A\otimes B)\setminus
(\sigma_{{SBF}_+^-}(A)\sigma_a(B) \cup \sigma_a(A)
\sigma_{{SBF}_+^-}(B))$. Then, for every $\mu\in\sigma_a(A)$ and
$\nu\in\sigma_a(B)$ such that $\lambda= \mu \nu$,
$\mu\notin \sigma_{{SBF}_+^-}(A)$ and
$\nu\notin\sigma_{{SBF}_+^-}(B)$, which implies that
$\mu\in\Pi^l(A)=E(A)$ and $\nu\in\Pi^l(B)=E(B)$. But then using \cite[Theorem 6]{HK},
it is not difficult to prove that $\lambda\in E(A\otimes B)=\Pi^l(A\otimes B)$.
Therefore, according to \cite[Theorem 2.6]{ABe},
$\lambda\notin \sigma_{{SBF}_+^-}(A\otimes B)$.

\noindent (b) Adapt the proof of (a) using in particular
$\sigma_a(\ST)=\sigma_a(A)\sigma_a(B^*)$.\end{proof}

\indent Next the problem of determining when the transfer property for property $(gw)$  implies
 the generalized $a$-Weyl spectrum inclusion  will be studied. In first place,
a particular case will be considered.\par

\begin{thm}\label{thm20} Let $\X$ and $\Y$ be two Banach spaces and consider
$A\in B(\X)$ and $B\in B(\Y)$  two isoloid operators.  Suppose
in addition that $A$ satisfies property $(gw)$.\par \noindent (a)
If  $B$ and $A\otimes B\in   B(\X  \overline{\otimes}  \Y)$
satisfy property $(gw)$, then the following statements hold.
\par
\noindent \rm (i) \it If $I^l(A)=\{0\}$, then the generalized
$a$-Weyl spectrum inclusion for $A\otimes B$ holds if and only if
$\sigma (B)\neq \{0\}$.\par

\noindent \rm (ii)\it If $\sigma_a(A)=\Pi^l(A)=\{0\}$, then the generalized $a$-Weyl's spectrum inclusion for $A\otimes B$
holds if and only if  $\sigma_{SBF_+^-}(B)=\emptyset$.\par

\noindent \rm (iii) \it If $ \{0\} = \Pi^l(A)\subsetneq \sigma_a
(A)$ and $0\notin\sigma_a(B)$, then necessary and sufficient for
the generalized $a$-Weyl spectrum inclusion for $A\otimes B$ to
hold is that $\sigma_{SBF_+^-}(B)=\emptyset$.\par

\noindent \rm (iv) \it If $\Pi^l(A)= \{0\}$ and
$0\in$ iso $\sigma(B)$, then necessary and sufficient for the
generalized $a$-Weyl spectrum inclusion for $A\otimes B$ to hold
is that $\sigma_{SBF_+^-}(A)= \sigma_{SBF_+^-}(B)=\emptyset$.\par

\noindent \rm (v) \it  If $\{0\}=\Pi^l(A)\subsetneq \sigma_a (A)$,
 $0\in$ iso $\sigma_a(B)$ and $\{0\}\notin$ iso $\sigma(B)$, then the generalized $a$-Weyl
spectrum inclusion for $A\otimes B$  holds.\par
\markright{ \hskip5truecm \rm Property $(gw)$}
\noindent \rm (vi) \it If $\{0\}=\Pi^l(A)\subsetneq \sigma_a (A)$,  and $0\in $
acc $\sigma_a (B)$, then the generalized $a$-Weyl spectrum inclusion for
$A\otimes B$  holds.\par

 \noindent (b) If  $B^*$ and
$\ST\in B(B(\Y,\X))$ satisfy property $(gw)$, then the following
statements holds.\par
\noindent \rm (vii) \it If $I^l(A)=\{0\}$,
then the generalized $a$-Weyl spectrum inclusion for $\ST$ holds
if and only if $\sigma (B^*)\neq \{0\}$.\par

\noindent \rm (viii)\it If $\sigma_a(A)=\Pi^l(A)=\{0\}$, then the generalized $a$-Weyl's spectrum inclusion for $\ST$
holds if and only if  $\sigma_{SBF_+^-}(B^*)=\emptyset$.\par

\noindent \rm (ix) \it If
$\{0\}=\Pi^l(A)\subsetneq \sigma_a (A)$ and $0\notin\sigma_a(B^*)$, then necessary and
sufficient for the generalized $a$-Weyl spectrum inclusion for
$\ST$ to hold is that $\sigma_{SBF_+^-}(B^*)=\emptyset$.\par

\noindent \rm (x) \it If $\Pi^l(A)= \{0\}$ and
$0\in$ iso $\sigma(B)$, then necessary and sufficient for the
generalized $a$-Weyl spectrum inclusion for $\ST$ to hold is that
$\sigma_{SBF_+^-}(A)= \sigma_{SBF_+^-}(B^*)=\emptyset$.\par

\noindent \rm (xi) \it  If $\{0\}=\Pi^l(A)\subsetneq \sigma_a
(A)$, $0\in$ iso $\sigma_a(B^*)$ and
$\{0\}\notin$ iso $\sigma(B)$, then the generalized $a$-Weyl
spectrum inclusion for $\ST$  holds.\par

\noindent \rm (xii) \it If $\{0\}=\Pi^l(A)\subsetneq \sigma_a (A)$,  and $0\in $
acc $\sigma_a (B^*)$, then the generalized $a$-Weyl spectrum inclusion for
$\ST$  holds.\par
\end{thm}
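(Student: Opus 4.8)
\emph{Proof plan.}
The plan is to recast the generalized $a$-Weyl spectrum inclusion as a disjointness statement and then run through the six cases by a uniform analysis at $0$ (and, in case (i), at nonzero products). First I would record the reduction: since $A$, $B$ and $A\otimes B$ satisfy $(gw)$, Proposition \ref{pro0}(a) gives $\sigma_{SBF_+^-}(A\otimes B)\subseteq\mathbb{S}_a$, and Proposition \ref{pro3} gives generalized $a$-Browder's theorem for $A\otimes B$, i.e. $\sigma_{SBF_+^-}(A\otimes B)=\sigma_a(A\otimes B)\setminus\Pi^l(A\otimes B)$; as $\mathbb{S}_a\subseteq\sigma_a(A)\sigma_a(B)=\sigma_a(A\otimes B)$ by \cite[Theorem 4.4]{I}, the inclusion $\mathbb{S}_a\subseteq\sigma_{SBF_+^-}(A\otimes B)$ is equivalent to $\mathbb{S}_a\cap\Pi^l(A\otimes B)=\emptyset$. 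By Lemma \ref{lem16}, Remark \ref{rem4} and the proof of Theorem \ref{thm18} the operators $A$, $B$, $A\otimes B$ are polaroid, so $\Pi^l(\cdot)=\Pi(\cdot)=\hbox{\rm iso }\sigma(\cdot)$ throughout, $\sigma_{SBF_+^-}(A)=\sigma_a(A)\setminus\Pi(A)$ (and likewise for $B$), $\Pi^l(A\otimes B)\setminus\{0\}=(\Pi(A)\setminus\{0\})(\Pi(B)\setminus\{0\})$ by Proposition \ref{pro19}(a)(i), and $\mathbb{L}_a=I^l(A\otimes B)\setminus\{0\}$ by Proposition \ref{pro19}(a)(ii); in particular $\mathbb{S}_a$, $\Pi^l(A\otimes B)\setminus\{0\}$ and $I^l(A\otimes B)\setminus\{0\}$ become explicit in the spectral data of $A$ and $B$.

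For the nonzero part of the disjointness, let $(0\neq)\lambda\in\mathbb{S}_a$ and factor $\lambda=\mu\nu$ with $\mu\in\sigma_a(A)$, $\nu\in\sigma_a(B)$, one of the two factors lying in the corresponding upper $B$-Weyl spectrum. If that factor lies in $\hbox{\rm acc }\sigma_a(\cdot)$, multiplying a nonconstant convergent sequence by the other (nonzero) factor gives $\lambda\in\hbox{\rm acc }\sigma_a(A\otimes B)$, hence $\lambda\notin\hbox{\rm iso }\sigma(A\otimes B)=\Pi^l(A\otimes B)$; if it is isolated in $\sigma_a(\cdot)$ but not a left pole, it lies in $I^l(A)$ or $I^l(B)$, and treating the remaining factor the same way one lands in $\mathbb{L}_a=I^l(A\otimes B)\setminus\{0\}$, which is disjoint from $\Pi^l(A\otimes B)$. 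Thus $\mathbb{S}_a\cap(\Pi^l(A\otimes B)\setminus\{0\})=\emptyset$ always: in cases (ii)--(vi) this is vacuous since $\Pi(A)=\{0\}$ forces $\Pi^l(A\otimes B)\subseteq\{0\}$, and in case (i) the hypothesis $I^l(A)=\{0\}$ is exactly what excludes the ``isolated'' alternative for the $A$-factor.

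It then remains to decide, in each case, whether $0\in\mathbb{S}_a$ and whether $0\in\Pi^l(A\otimes B)$. The first is read off from whether $\sigma_{SBF_+^-}(A)$, $\sigma_{SBF_+^-}(B)$ are nonempty together with the position of $0$ in $\sigma_a(A)$, $\sigma_a(B)$, and it produces the conditions ``$\sigma(B)\neq\{0\}$'', ``$\sigma_{SBF_+^-}(B)=\emptyset$'' and ``$\sigma_{SBF_+^-}(A)=\sigma_{SBF_+^-}(B)=\emptyset$'' appearing in (i)--(iv). The second, i.e. whether $A\otimes B$ is Drazin invertible, I would settle exactly as in the proof of Theorem \ref{thm18}, from $\sigma(A\otimes B)=\sigma(A)\sigma(B)$, polaroidness, \cite[Theorem 4]{K} and \cite[Theorem 2.7]{BBO}: in (i), $A$ polaroid with $0\notin\Pi(A)$ forces $0\in\hbox{\rm acc }\sigma(A)$, so $0\in\hbox{\rm acc }\sigma(A\otimes B)$ iff $\sigma(B)\neq\{0\}$; in (ii), $\sigma_a(A)=\Pi(A)=\{0\}$ forces $A$, hence $A\otimes B$, nilpotent; in (iii), the hypothesis that $A\otimes B$ satisfies $(gw)$ forces $B$ to be invertible (a non-invertible $B$ with $0\notin\sigma_a(B)$ would make $0$ a point of $\Delta_a^g(A\otimes B)\setminus E(A\otimes B)$), so $A\otimes B$ is Drazin invertible and the inclusion reduces to $\sigma_{SBF_+^-}(B)=\emptyset$; in (iv), $0\in\Pi(A)\cap\Pi(B)$ forces $A\otimes B$ Drazin invertible with $\Pi^l(A\otimes B)=\{0\}$; and in (v)--(vi), $0\in\hbox{\rm acc }\sigma(B)$ (resp. $0\in\hbox{\rm acc }\sigma_a(B)$) together with $\sigma_a(A)\supsetneq\{0\}$ forces $0\in\hbox{\rm acc }\sigma(A\otimes B)$, so $\Pi^l(A\otimes B)=\emptyset$ and the inclusion is unconditional. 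Part (b) follows by the same argument with $B$ replaced by $B^*$, using $\sigma_a(\ST)=\sigma_a(A)\sigma_a(B^*)$ and the polaroidness of $\ST$ (\cite[Proposition 4.3(i)]{BDJ}, \cite[Lemma 4.7]{BDJ}), $\Pi(B)=\Pi(B^*)$ (\cite[Theorem 2.8]{AS}), and the fact that $B$ is Drazin invertible, respectively algebraic, iff $B^*$ is.

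The step I expect to be the main obstacle is precisely this $0$-analysis when $B$ is degenerate near $0$ — notably $0\notin\sigma_a(B)$ with $0\in\sigma(B)$ in (iii), and $0\in\hbox{\rm iso }\sigma_a(B)\setminus\Pi^l(B)$ in (v)--(vi): a direct Fredholm-theoretic computation of $\sigma_{SBF_+^-}(A\otimes B)$ is delicate, and one must instead use the hypothesis that $A\otimes B$ itself satisfies $(gw)$ (not merely $A$ and $B$), channelled through the tensor-product Drazin-invertibility and algebraicity bookkeeping of Theorem \ref{thm18}, in order to rule out the configurations that would otherwise break the stated equivalences.
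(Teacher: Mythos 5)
Your overall route coincides with the paper's: Proposition \ref{pro0} together with generalized $a$-Browder's theorem for $A\otimes B$ reduces the generalized $a$-Weyl spectrum inclusion to the disjointness ${\mathbb S}_a\cap \Pi^l(A\otimes B)=\emptyset$; the nonzero part of ${\mathbb S}_a$ is disposed of, exactly as in the paper, via $\hbox{\rm acc\,}\sigma_a(A\otimes B)$ and ${\mathbb L}_a=I^l(A\otimes B)\setminus\{0\}$ (Proposition \ref{pro19}); and each case is then decided by locating $0$. Your treatment of (i), (ii), (iv), (v), (vi) and of part (b) matches the paper's computations and is sound.

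The gap is in case (iii) (and its dual (ix)). To handle the necessity direction you claim that a non-invertible $B$ with $0\notin\sigma_a(B)$ would put $0$ in $\Delta_a^g(A\otimes B)\setminus E(A\otimes B)$, contradicting property $(gw)$ for $A\otimes B$, and thereby force $B$ to be invertible. This is not correct. If $0\in\sigma(B)\setminus\sigma_a(B)$ then $0\notin\partial\sigma(B)$ (since $\partial\sigma(B)\subseteq\sigma_a(B)$), so $0$ is an interior point of $\sigma(B)$ and in particular $0\in\hbox{\rm acc\,}\sigma(B)$; as $\sigma_a(A)\supsetneq\{0\}$, this yields $0\in\hbox{\rm acc\,}\sigma(A\otimes B)$, hence $0\notin\Pi(A\otimes B)=\Pi^l(A\otimes B)$ ($A\otimes B$ is polaroid, Remark \ref{rem4}), hence $0\in\sigma_{SBF_+^-}(A\otimes B)$ and therefore $0\notin\Delta_a^g(A\otimes B)$ --- no violation of $(gw)$ arises, so your mechanism proves nothing. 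Note that the sufficiency direction needs no invertibility at all: $\sigma_{SBF_+^-}(B)=\emptyset$ and $0\notin\sigma_a(B)$ give $0\notin{\mathbb S}_a$ outright, and only the (always valid) nonzero disjointness matters, which is how the paper argues. What the necessity direction actually requires is $0\in\Pi(A\otimes B)$ when $\sigma_{SBF_+^-}(B)\neq\emptyset$, equivalently $0\notin\hbox{\rm acc\,}\sigma(B)$; the paper obtains $0\in\Pi^l(A\otimes B)=\Pi(A\otimes B)$ directly from polaroidness and \cite[Theorem 6]{HK}, and your argument supplies no substitute for that step. You would need either to justify $0\notin\hbox{\rm acc\,}\sigma(B)$ from the standing hypotheses or to impose it, before the claimed contradiction $0\in{\mathbb S}_a\cap\Pi^l(A\otimes B)$ can be drawn.
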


\begin{proof}(i). If $\sigma(B)= \{0\}$, then
$\sigma (A\otimes B)=\{0\}$. In addition,
according to Lemma \ref{lem16} and \cite{DHK}, $A\otimes B$ is
polaroid. Consequently, according to Remark \ref{rem4},
$\{0\}=\Pi(A\otimes B)= \Pi^l(A\otimes B)$. As a result,
$\sigma_{{SBF}_+^-}(A\otimes B)=\emptyset$ (\cite[Theorem
2.6]{ABe}). On the other hand, since
$\{0\}=I^l(A)=$ iso $\sigma_a(A)\setminus\Pi^l(A) \subseteq
\sigma_{{SBF}_+^-}(A)$ (\cite[Theorem 2.8]{BK}), $ 0\in\mathbb{
S}_a$ ($= \sigma_a(A)\sigma_{{SBF}_+^-}(B) \cup
\sigma_{{SBF}_+^-}(A)\sigma_a(B)$). Therefore, the generalized
$a$-Weyl spectrum inclusion for $A\otimes B$ does not hold.\par

\indent For the converse,
observe that $0\in \sigma_a(A\otimes B)$ if and only if
$0\in (\sigma_a(A)\cup \sigma_a(B))$,
and in this case $0\in\mathbb{ S}_a$. Now well,
since $A$ is polaroid, $0\in$ acc $\sigma(A)$, and
since $\sigma(B)\neq\{0\}$, clearly $0\in$ acc $\sigma (A\otimes B)$.
However, since $A\otimes B$ is polaroid, according to Remark \ref{rem4},
$0\in\sigma_a(A\otimes B)\setminus\Pi^l(A\otimes B)=\sigma_{{SBF}_+^-}(A\otimes B)$ (\cite[Theorem 2.6]{ABe}).
Now consider $\mathbb{S}_a\setminus \{0\}$. Since $A$ and $B$ satisfy the generalized $a$-Browder's theorem,
according to \cite[Theorem 2.8]{BK},
 $\sigma_{SBF_+^-}(A)\setminus  \{0\}=$ acc $\sigma_a(A)$ and $\sigma_{SBF_+^-}(B)\setminus \{0\}
=$ (acc $\sigma_a(B)\setminus \{0\})\cup ( I^l(B)\setminus \{0\})$. As a result, a straightforward calculation, using in particular Proposition \ref{pro19}(ii),
proves that $\mathbb{S}_a\setminus \{0\}\subseteq $ acc $\sigma_a (A\otimes B)\cup \mathbb{L}_a$.
Consequently, $(\mathbb{S}_a\setminus \{0\})\cap (\sigma_a(A\otimes B)\setminus \sigma_{SBF_+^-}(A\otimes B))
= (\mathbb{S}_a\setminus \{0\})\cap \Pi^l(A\otimes B)=\emptyset$, equivalently, $\mathbb{S}_a\setminus \{0\}\subseteq \sigma_{SBF_+^-}(A\otimes B)$.\par

\noindent (ii).  Suppose that $\sigma_a (A)=\Pi^l(A)=\{0\}$. Then, $\sigma_{SBF_+^-}(A)=\sigma_{LD}(A)=\emptyset$
and, according to \cite[Theorem 2.7]{BBO}, $A$ is algebraic.
In addition, according to \cite[Theorem 1.5]{BKMO} and \cite[Theorems 3 and 12]{Bo},
$\sigma (A)=\Pi(A)=\{0\}$. As a result, $A$ is nilpotent, which
implies that $A\otimes B$ is nilpotent. Hence, according to \cite[Theorem 4]{K}, $\Pi(A\otimes B)=\Pi^l(A\otimes B)=\{0\}$
and $\sigma_{SBF_+^-}(A\otimes B)=\emptyset$. However,
${\mathbb S}_a =\sigma_a(A)\sigma_{SBF_+^-}(B)$. Therefore, ${\mathbb S}_a=\emptyset$
if and only if $\sigma_{SBF_+^-}(B)=\emptyset$.\par

\noindent (iii). The hypotheses imply that
$\sigma_a(A)\setminus\{0\}=\sigma_{{SBF}_+^-}(A)\neq \emptyset$. Assume that
$\sigma_{{SBF}_+^-}(B)=\emptyset$. Hence, if $0\notin \sigma_a(B)$, then
$\mathbb{S}_a=(\sigma_a(A)\setminus\{0\}) \sigma_a(B)=
\sigma_a(A\otimes B)\setminus\{0\}$. Observe that
$\Pi^l(A)=\Pi(A)=\{0\}$, and so (since $0\notin\sigma_a(B)$ and $A\otimes B$ is polaroid)
$0\in\Pi(A\otimes B)=\Pi^l(A\otimes B)$. However, since $A$, $B$ and $A\otimes B$ are polaroid operators
and $\Pi^l(A)=\{0\}$, according
to \cite[Theorem 6]{HK}, $\Pi^l(A\otimes B)= \Pi(A\otimes B)=\{0\}$.
Therefore, $\sigma_{{SBF}_+^-}(A\otimes B)=\sigma_a(A\otimes B)\setminus \Pi^l(A\otimes B)=
\sigma_a(A\otimes B)\setminus  \{0\}= \mathbb{S}_a$.\par

\indent Conversely, if the generalized $a$-Weyl spectrum inclusion holds
and $\sigma_{{SBF}_+^-}(B)\neq \emptyset$, then $0\in
\sigma_a(A)\sigma_{{SBF}_+^-}(B)$, which implies that $0\in
\sigma_{{SBF}_+^-}(A\otimes B)$, and  which in turn implies that
$0\notin\Pi^l(A\otimes B)= \Pi(A\otimes B)$. However,
this is a contradiction, for $0\in \Pi^l(A\otimes B)= \Pi(A\otimes B)$.\par
\markright{ \hskip5truecm \rm ENRICO BOASSO and B. P. DUGGAL}
\noindent (iv). Suppose that  $\Pi^l(A)=\{0\}$ and
$0\in$  iso $\sigma(B)$. Then, since $B$ isoloid implies that $0\in
E(B)$ and  $A$ and $A\otimes B$ satisfy property $(gw)$,
 it follows from \cite[Theorem 2.6]{ABe}
that $\Pi^l(A)=E(A)$ and $0\in E(A\otimes
B)=\Pi^l(A\otimes B)$. In addition, since $A$, $B$, and $A\otimes
B$ are polaroid, according to \cite[Theorem 6]{HK},
$\Pi^l(A\otimes B)=\{0\}$. In particular, $0\notin
\sigma_{{SBF}_+^-}(A\otimes B)$. However, since $0\in
\sigma_a(A)\cap\sigma_a(B)$ ($E(B)=\Pi^l(B)$), the generalized $a$-Weyl spectrum
inclusion holds for $A\otimes B$ if and only if
 $\sigma_{{SBF}_+^-}(A)=\sigma_{{SBF}_+^-} B)=\emptyset$.\par
\markright{ \hskip5truecm \rm Property $(gw)$}
\noindent (v). The hypotheses $0\in$ iso $\sigma_a(B)$ and
$\{0\}\notin$ iso $\sigma(B)$ imply that $0\in I^l(B)$. Since
$\sigma_{{SBF}_+^-}(A)=\sigma_a(A)\setminus \{0\}$ and  $0\in
\sigma_a(A)$, a straightforward calculation proves that
$\mathbb{S}_a=\sigma_a(A\otimes B)$. On the other hand, since
$0\in \Pi^l(A)\cap I^l(B)$, $0\in$ iso $\sigma_a(A\otimes B)$.
Moreover, since $B$ is polaroid, $0\in $ acc $\sigma(B)$, and
since $\sigma_a(A)\setminus \{0\}\neq \emptyset$, $0\in $ acc
$\sigma(A\otimes B)$. As a result, $0\notin\Pi(A\otimes
B)=\Pi^l(A\otimes B)$. However, according to \cite[Theorem
6]{HK}, $\Pi^l(A\otimes B)=\Pi(A\otimes B) =\emptyset$.
Consequently, $\sigma_{{SBF}_+^-}(A\otimes B)=\sigma_a(A\otimes
B)$.\par

\noindent (vi). If $0\in $ acc $\sigma_a(B)$ and $\sigma_a(A)\neq \{0\}$,
then a straightforward calculation proves that $0\in$ acc $\sigma_a(A\otimes B)$.
However, since $A\otimes B$ is polaroid and $\Pi (A)=\Pi^l(A)=\{0\}$,
according to \cite[Theorem 6]{HK}, $\Pi^l(A\otimes B)=\Pi(A\otimes B)=\emptyset$.
Therefore, $\sigma_a(A\otimes B)=\sigma_{{SBF}_+^-}( A\otimes B)$ and the
generalized $a$-Weyl spectum inclusion trivially holds.\par

\noindent (vii)-(xii).  Adapt the proof of (i)-(vi) using in particular that $\sigma_a(\ST)=\sigma_a(A)\sigma_a(B^*)$, 
that $\ST$ is polaroid 
and the fact that $B\in B(\Y)$ is polaroid if and only if $B^*\in B(\Y^*)$ is polaroid.
\end{proof}

\indent Note that under the same conditions of Theorem \ref{thm20},
if instead of $I^l(A)= \{0\}$ or $\Pi^l(A)=\{0\}$ the conditions
$I^l(B)= \{0\}$ or $\Pi^l(B)=\{0\}$
are assumed, then results similar to the ones in  Theorem \ref{thm20}
can be proved.\par
\markright{ \hskip5truecm \rm ENRICO BOASSO and B. P. DUGGAL}
\indent In what follows the study of the relationship
between property $(gw)$ and the generalized $a$-Weyl spectrum
inclusion both for the tensor product operator and the left-right
multiplication operator will be completed. To this end,
 given $\X$ and $\Y$ two
Banach spaces and $A\in B(\X)$ and $B\in B(\Y)$, set
$$
{\mathbb A}_a= \sigma_a (A) (\hbox{\rm acc } \sigma_a(B))\cup (\hbox{\rm acc } \sigma_a(A))\sigma_a(B),\hskip.2truecm
{\mathbb B}_a=I^l(A)I^l(B)\cup I^l(A)\Pi^l(B)\cup \Pi^l(A)I^l(B),
$$
$$
{\mathbb A}'_a= \sigma_a (A) (\hbox{\rm acc } \sigma_a(B^*))\cup (\hbox{\rm acc } \sigma_a(A))\sigma_a(B^*),
\hskip.1truecm{\mathbb B}_a=I^l(A)I^l(B^*)\cup I^l(A)\Pi^l(B^*)\cup \Pi^l(A)I^l(B^*).
$$

Observe that if property $(gw)$ holds for $A$ and
$B$ (respectively for $A$ and $B^*$), then ${\mathbb
S}_a={\mathbb A}_a\cup {\mathbb B}_a$ (respectively ${\mathbb
S}'_a={\mathbb A}'_a\cup {\mathbb B}'_a$). In fact, in the tensor
product operator case and under the above mentioned conditions,
$\sigma_{SBF_+^-}(A) =\sigma_{LD}(A)=$ acc $\sigma_a(A)\cup
I^l(A)$ and $\sigma_{SBF_+^-}(B) =\sigma_{LD}(B)=$ acc
$\sigma_a(B)\cup I^l(B)$. A similar argument proves the left-right
multiplication operator case.
\par

\indent Furthermore, according to Proposition \ref{pro0}, if $\sigma_{SBF_+^-}(A) =\sigma_{SBF_+^-}(B)
=\emptyset$, (respectively if $\sigma_{SBF_+^-}(A) =\sigma_{SBF_+^-}(B^*)=\emptyset$)
then $\sigma_{SBF_+^-}(A\otimes B)=\emptyset$ (respectively $\sigma_{SBF_+^-}(\ST)=\emptyset$). In particular, the generalized $a$-Weyl spectrum inclusion for $A\otimes B$
(respectively for $\ST$)
holds. As a result, to conclude this work, two cases need to be considered, namely, (i) when only one of the upper B-Weyl spectra  of $A$
and $B$ (respectively $B^*$) is the empty set, (ii) when the upper B-Wely spectrum both  of $A$ and of $B$ (respectively of $B^*$) is not empty.
In the next theorem, the first case will be considered.\par

\begin{thm}\label{thm22} Let $\X$ and $\Y$ be two Banach spaces and consider
$A\in B(\X)$ and $B\in B(\Y)$ two isoloid operators. Suppose in
addition that $A$ satisfies property $(gw)$ and $\sigma_a(A)=\Pi (A)\neq\{0\}$.\par

 \noindent (a) If  $B\in B(\Y)$ is
such that $\sigma_{SBF_+^-}(B) \neq\emptyset$ and $B$ and $A\otimes B\in B(\X
\overline{\otimes} \Y)$ satisfy property $(gw)$, then the
following statements are equivalents.\par 

\noindent\rm (i)\it The generalized $a$-Weyl spectrum inclusion for $A\otimes B$
holds.\par

 \noindent \rm (ii) Either $0\notin \Pi(A)$, or, if $0\in\Pi (A)$, then $B$ is not left Drazin invertible.\par
\noindent Furthermore, if  statements (i)-(ii) hold, then
${\mathbb S}_a=\sigma_{SBF_+^-}(A\otimes B)$, while if one of
these statements does not hold, then ${\mathbb S}_a=
\sigma_{SBF_+^-}(A\otimes B)\cup  \{0\}$, $0\notin \sigma_{SBF_+^-}(A\otimes B)$.\par
 \noindent (b) If  $B^*\in B(\Y^*)$ is
such that $\sigma_{SBF_+^-}(B^*) \neq\emptyset$ and $B^*$ and $\ST\in B(B(\Y,\X))$ satisfy property $(gw)$, then the
following statements are equivalents.\par 

\noindent\rm (iii)\it The generalized $a$-Weyl spectrum inclusion for $\ST$
holds.\par

 \noindent \rm (iv) Either $0\notin \Pi(A)$, or, if $0\in\Pi (A)$, then $B^*$ is not left Drazin invertible.\par
\noindent Furthermore, if  statements (i)-(ii) hold, then
${\mathbb S}'_a=\sigma_{SBF_+^-}(\ST)$, while if one of
these statements does not hold, then ${\mathbb S}'_a=
\sigma_{SBF_+^-}(\ST)\cup  \{0\}$, $0\notin \sigma_{SBF_+^-}(\ST)$.\par
\end{thm}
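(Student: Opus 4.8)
The plan is to reduce the equivalence (i)$\Leftrightarrow$(ii) of part~(a) to a statement about the single point $0$ and then to read off the ``Furthermore'' assertion from Proposition~\ref{pro0}; since $\sigma_a(\ST)=\sigma_a(A)\sigma_a(B^*)$, $\ST$ is polaroid, and $B$ is Drazin invertible (respectively left Drazin invertible, respectively bounded below) exactly when $B^*$ is, part~(b) follows by the usual substitution of $B^*$ for $B$, so I would write out only~(a). First I record the consequences of the hypotheses. Since $A$ satisfies $(gw)$ and $\sigma_a(A)=\Pi(A)$, Proposition~\ref{pro3} and Remark~\ref{rem4} give $\Pi^l(A)=\Pi(A)=E(A)=\sigma_a(A)$, so $A$ is polaroid (Lemma~\ref{lem16}), $\sigma_{SBF_+^-}(A)=\sigma_a(A)\setminus\Pi^l(A)=\emptyset$, and both acc $\sigma_a(A)$ and $I^l(A)$ are empty; hence $\mathbb{S}_a=\sigma_a(A)\,\sigma_{SBF_+^-}(B)$. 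Since $B$ satisfies $(gw)$ and is isoloid, $B$ is polaroid and $\sigma_{SBF_+^-}(B)=\sigma_a(B)\setminus\Pi^l(B)=\sigma_a(B)\setminus\Pi(B)$, a subset of $\hbox{\rm acc }\sigma(B)$ because $B$ is polaroid. Finally, as in the proof of Theorem~\ref{thm18}, $A$ and $B$ isoloid force $A\otimes B$ isoloid, hence polaroid (by $(gw)$ for $A\otimes B$ and Lemma~\ref{lem16}), so that $\Pi^l(A\otimes B)=\Pi(A\otimes B)$ is the set of isolated points of $\sigma(A\otimes B)$, $\sigma_{SBF_+^-}(A\otimes B)=\sigma_a(A\otimes B)\setminus\Pi^l(A\otimes B)$, $\sigma_a(A\otimes B)=\sigma_a(A)\sigma_a(B)$, and $\sigma(A\otimes B)=\sigma(A)\sigma(B)$.

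The next step is the observation that $\mathbb{S}_a\setminus\{0\}\subseteq\sigma_{SBF_+^-}(A\otimes B)$ holds unconditionally: if $\lambda=\mu\nu\neq0$ with $\mu\in\sigma_a(A)$ and $\nu\in\sigma_{SBF_+^-}(B)$, then $\mu,\nu\neq0$ and $\nu\in\hbox{\rm acc }\sigma(B)$, so $\mu\nu$ is an accumulation point of $\sigma(A)\sigma(B)=\sigma(A\otimes B)$; hence $\lambda$ is not isolated in $\sigma(A\otimes B)$, i.e. $\lambda\notin\Pi^l(A\otimes B)$, while $\lambda\in\sigma_a(A)\sigma_a(B)=\sigma_a(A\otimes B)$, so $\lambda\in\sigma_{SBF_+^-}(A\otimes B)$. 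Consequently the generalized $a$-Weyl spectrum inclusion for $A\otimes B$ holds if and only if $0\notin\mathbb{S}_a$ or $0\in\sigma_{SBF_+^-}(A\otimes B)$. Since $\sigma_a(A)=\Pi(A)\neq\emptyset$ and $\sigma_{SBF_+^-}(B)\neq\emptyset$, $0\in\mathbb{S}_a$ is equivalent to $0\in\Pi(A)$ or $0\in\sigma_{SBF_+^-}(B)$, and ``$B$ is left Drazin invertible'' means $0\notin\sigma_{LD}(B)=\sigma_{SBF_+^-}(B)$; thus (ii) says ``$0\notin\Pi(A)$, or $0\in\Pi(A)$ and $0\in\sigma_{SBF_+^-}(B)$'', and it remains to match this with ``$0\notin\mathbb{S}_a$ or $0\in\sigma_{SBF_+^-}(A\otimes B)$''.

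For (ii)$\Rightarrow$(i): under (ii), either $0\notin\Pi(A)$ and $0\notin\sigma_{SBF_+^-}(B)$, in which case $0\notin\mathbb{S}_a$, or $0\in\sigma_{SBF_+^-}(B)$, and then $0\in\sigma_a(B)$ and $0\in\hbox{\rm acc }\sigma(B)$, so picking $\mu_0\in\sigma(A)\setminus\{0\}$ (possible because $\sigma(A)\supseteq\Pi(A)\neq\{0\}$) makes $0$ an accumulation point of $\sigma(A\otimes B)$ as well as a point of $\sigma_a(A\otimes B)$, whence $0\in\sigma_{SBF_+^-}(A\otimes B)$. For the contrapositive, suppose (ii) fails, i.e. $0\in\Pi(A)$ and $0\notin\sigma_{SBF_+^-}(B)$; then $0\in\mathbb{S}_a$ and I must show $0\notin\sigma_{SBF_+^-}(A\otimes B)$. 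Here $0\notin\sigma_{SBF_+^-}(B)=\sigma_a(B)\setminus\Pi(B)$ forces $B$ to be bounded below or Drazin invertible. Splitting $\X=N(A^p)\oplus R(A^p)$ with $p=\asc(A)=\dsc(A)$, one gets $A\otimes B=(A_0\otimes B)\oplus(A_1\otimes B)$ with $A_0$ nilpotent and $A_1$ invertible, so $A_0\otimes B$ is nilpotent (hence B-Weyl at $0$) while $A_1\otimes B$, differing from $I\otimes B$ by an invertible factor, has the same upper semi-B-Fredholm behaviour as $I\otimes B$; since $B$ is bounded below or Drazin invertible, $I\otimes B$ is upper semi-B-Fredholm of index $\leq0$, and therefore so is $A\otimes B$. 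Hence $0\notin\sigma_{SBF_+^-}(A\otimes B)$, and (i) fails. Finally, Proposition~\ref{pro0} always yields $\sigma_{SBF_+^-}(A\otimes B)\subseteq\mathbb{S}_a$; combining this with (i) when (i)-(ii) hold gives $\mathbb{S}_a=\sigma_{SBF_+^-}(A\otimes B)$, and combining it with the non-zero part and with $0\in\mathbb{S}_a$, $0\notin\sigma_{SBF_+^-}(A\otimes B)$ when (ii) fails gives $\mathbb{S}_a=\sigma_{SBF_+^-}(A\otimes B)\cup\{0\}$.

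The step I expect to be the main obstacle is the Fredholm-theoretic claim that, when $B$ is bounded below (or Drazin invertible), $I\otimes B$ is again upper semi-B-Fredholm of nonpositive index for the chosen reasonable cross-norm, and that adjoining the B-Weyl summand $A_0\otimes B$ does not destroy this; making this precise is where the real work lies. One should also note that it is the hypothesis ``$A\otimes B$ satisfies $(gw)$'', through the polaroidness it confers on $A\otimes B$, that renders the otherwise dangerous configuration $0\in\sigma(B)\setminus\sigma_a(B)$ harmless (there $0$ would be an accumulation point of $\sigma(A\otimes B)$ lying in $\sigma_a(A\otimes B)$, hence in $\sigma_{SBF_+^-}(A\otimes B)$, which clashes with the conclusion just obtained, so this configuration simply cannot arise). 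The remainder is bookkeeping resting on the polaroidness of $A$, $B$ and $A\otimes B$ and on Proposition~\ref{pro0}.
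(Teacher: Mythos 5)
Your overall strategy coincides with the paper's: both arguments note that the hypothesis $\sigma_a(A)=\Pi(A)$ forces $\sigma_{SBF_+^-}(A)=\emptyset$ and ${\mathbb S}_a=\sigma_a(A)\sigma_{SBF_+^-}(B)$, reduce everything to the single point $0$ by establishing ${\mathbb S}_a\setminus\{0\}\subseteq\sigma_{SBF_+^-}(A\otimes B)$ (so that, by Proposition \ref{pro0}, statement (i) is equivalent to the implication ``$0\in{\mathbb S}_a\Rightarrow 0\in\sigma_{SBF_+^-}(A\otimes B)$''), and then run the same case analysis at $0$. Your route to the off-zero inclusion --- every nonzero $\mu\nu$ with $\nu\in\sigma_{SBF_+^-}(B)\subseteq\hbox{\rm acc }\sigma(B)$ accumulates in $\sigma(A)\sigma(B)=\sigma(A\otimes B)$, hence avoids $\Pi^l(A\otimes B)$ --- is a legitimate substitute for the paper's computation of $\hbox{\rm acc }\sigma_a(A\otimes B)\setminus\{0\}$ and $I^l(A\otimes B)\setminus\{0\}$ via \cite[Theorem 6]{HK} and Proposition \ref{pro19}(ii); you obtain only an inclusion where the paper obtains an equality off $0$, but combined with Proposition \ref{pro0} that suffices, including for the ``Furthermore'' clause. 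The implication (ii)$\Rightarrow$(i) is complete and correct.

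The one genuine deficiency is the step you yourself flag: when $0\in\Pi(A)$ and $B$ is left Drazin invertible you must show $0\notin\sigma_{SBF_+^-}(A\otimes B)$, and your decomposition $A\otimes B=(A_0\otimes B)\oplus(A_1\otimes B)$ is left as a declared obstacle rather than proved. The sketch does close: for $m$ at least the nilpotency index of $A_0$ one has $R((A\otimes B)^m)=R((A_1\otimes B)^m)$, so the nilpotent summand costs nothing; $A_1\otimes B=(A_1\otimes I)(I\otimes B)$ with $A_1\otimes I$ invertible; and $I\otimes B$ is bounded below, respectively Drazin invertible, whenever $B$ is (the first by $\sigma_a(I\otimes B)=\sigma_a(B)$, the second by splitting $B$ itself). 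But none of this is written down, and it is not a dispensable refinement: the paper's shortcut, namely proving $0\in\Pi^l(A\otimes B)=\Pi(A\otimes B)$, works cleanly when $B$ is Drazin invertible (then $A\otimes B$ is Drazin invertible, as in the proof of Theorem \ref{thm18}) or when $0\notin\sigma(B)$ (then $0$ is isolated in $\sigma(A)\sigma(B)$ and polaroidness of $A\otimes B$ finishes), but in the residual configuration $0\in\sigma(B)\setminus\sigma_a(B)$ the point $0$ lies in $\hbox{\rm acc }\sigma(A\otimes B)$ and is therefore \emph{not} a pole, so that configuration must be shown to be incompatible with property $(gw)$ for $A\otimes B$ --- and the only way to see that (as your closing remark correctly intuits) is again the Fredholm computation, which puts $0$ outside $\sigma_{SBF_+^-}(A\otimes B)$ while $0$ is an eigenvalue of $A\otimes B$ lying outside $E(A\otimes B)$. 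So you have identified exactly the right missing lemma; the proof is not finished until you prove it.
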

\markright{ \hskip5truecm \rm Property $(gw)$}
\begin{proof} (a). First  of all note that 
$$\sigma_a(A\otimes B)=\Pi(A)\Pi (B)\cup \Pi(A)\sigma_{SBF_+^-}(B),\hskip.3truecm
{\mathbb S}_a=\Pi (A)\sigma_{SBF_+^-}(B)\neq\emptyset.$$ 
\indent Moreover, according to \cite[Theorem 6]{HK} and Proposition \ref{pro19}(ii),
acc $\sigma_a(A\otimes B)\setminus\{0\}= (\Pi(A)\setminus\{0\}) ($acc $\sigma_a(B)\setminus\{0\})$
and $I^l(A\otimes B)\setminus\{0\}=   (\Pi(A)\setminus\{0\})(I^l(B)\setminus\{0\})$. In particular,
$$
\sigma_{SBF_+^-}(A\otimes B)\setminus\{0\}=\hbox{ acc } \sigma_a(A\otimes B)\setminus\{0\}\cup
I^l(A\otimes B)\setminus\{0\}={\mathbb S}_a\setminus\{0\}.
$$
As a result, according to Proposition \ref{pro0}, statement (i) is equivalent to the 
following implication: if $0\in {\mathbb S}_a$, then $0\in \sigma_{SBF_+^-}(A\otimes B)$.\par

\indent  Note that if $0\in \sigma_{SBF_+^-}(B)$, then either $0\in$ acc $\sigma_a(B)$ or $0\in I^l(B)$.
If $0\in$ acc $\sigma_a(B)$, then since $\sigma_a(A)\neq\{0\}$,  $0\in $ acc $\sigma_a(A\otimes B)\subseteq
\sigma_{SBF_+^-}(A\otimes B)$. On the other hand, if $0\in I^l(B)\subseteq $ acc $\sigma (B)$ (Lemma \ref{lem16}),
then since $\sigma_a(A)\neq\{0\}$, $0\in $ acc $\sigma (A\otimes B)$. Thus, $0\notin \Pi(A\otimes B)=\Pi^l(A\otimes B)$,
which implies that $0\in  \sigma_{SBF_+^-}(A\otimes B)$. Consequently, if $0\in \sigma_{SBF_+^-}(B)$, then
clearly the aforementioned implication and  statement (i) hold.\par
\indent Observe that if $0\notin\Pi (A)$ and $0\notin  \sigma_{SBF_+^-}(B)$, then there is nothing to verify, while if
$0\notin\Pi (A)$ and $0\in  \sigma_{SBF_+^-}(B)$, according to what has been proved, $0\in \sigma_{SBF_+^-}(A\otimes B)$.
Now assume that  $0\in \Pi(A)$, equivalently $A$ is Drazin invertible. Naturally, if $0\in \sigma_{SBF_+^-}(B)$, according to what has been proved, the aforementioned
implication and statement (i) hold. However, if $0\notin \sigma_{SBF_+^-}(B)=\sigma_{LD}(B)$, equivalently if $B$ is left Drazin invertible,
it is not difficult to prove that $0\in\Pi^l(A\otimes B)=\Pi(A\otimes B)$. Therefore, only in this case $0\in{\mathbb S}_a\setminus \sigma_{SBF_+^-}(A\otimes B)$.\par
\indent The last statement can be derived from what has been proved.\par
\noindent (b). Adapt the proof of (a) to the case under consideration.
\end{proof}

\indent Naturally, interchanging $A$ with $B$ in Theorem \ref{thm22}, similar results can be proved. Next the last case will be considered.\par

\begin{thm}\label{thm21} Let $\X$ and $\Y$ be two Banach spaces and consider
$A\in B(\X)$ and $B\in B(\Y)$ two isoloid operators. Suppose in
addition that $A$ satisfies property $(gw)$ and $\sigma_{SBF_+^-}(A)\neq\emptyset$.\par

 \noindent (a) If  $B\in B(\Y)$ is
such that $\sigma_{SBF_+^-}(B) \neq\emptyset$ and $B$ and $A\otimes B\in B(\X
\overline{\otimes} \Y)$ satisfy property $(gw)$, then the
following statements are equivalents.\par 

\noindent \rm (i) \it
The generalized $a$-Weyl spectrum inclusion for $A\otimes B$
holds.\par

 \noindent \rm (ii) \it $0\notin\Pi(A\otimes B)=\Pi^l(A\otimes B)$.\par

\noindent \rm (iii)\it Either $A\otimes B$ is bounded below or $0\in \sigma_{SBF_+^-}(A\otimes B)$.\par
\noindent Furthermore, if  statements (i)-(iii) hold, then
${\mathbb S}_a=\sigma_{SBF_+^-}(A\otimes B)$, while if one of
these statements does not hold, then ${\mathbb S}_a=
\sigma_{SBF_+^-}(A\otimes B)\cup  \{0\}$, $0\notin \sigma_{SBF_+^-}(A\otimes B)$.\par

 \noindent (b) If
$B^*\in B(\Y^*)$ is such that $\sigma_{SBF_+^-}(B^*) \neq\emptyset$ and $B^*$
and $\ST \in B(B( \Y,\X))$ satisfy property $(gw)$, then the
following statements are equivalents.\par 

\noindent \rm (iv) \it
The generalized $a$-Weyl spectrum inclusion for $\ST$ holds.\par

\noindent \rm (v) \it $0\notin\Pi(\ST)=\Pi^l(\ST)$.\par 
\markright{ \hskip5truecm \rm ENRICO BOASSO and B. P. DUGGAL}
\noindent \rm (vi)\it Either $\ST$ is bounded below or $0\in \sigma_{SBF_+^-}(\ST)$.\par
\noindent
Furthermore, if  statements (iv)-(vi) hold, then ${\mathbb
S}'_a=\sigma_{SBF_+^-}(\ST)$, while if one of these statements
does not hold, then ${\mathbb S}'_a= \sigma_{SBF_+^-}(\ST)\cup
\{0\}$, $0\notin \sigma_{SBF_+^-}(\ST)$.
\end{thm}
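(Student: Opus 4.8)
The plan is to reduce everything to an analysis of the point $0$, exactly as in Theorems \ref{thm20} and \ref{thm22}, since Proposition \ref{pro19}(ii) already controls the nonzero part of the relevant spectra. First I would record the structural identities available under the standing hypotheses: since $A$ is isoloid and satisfies property $(gw)$, Lemma \ref{lem16} gives that $A$ is polaroid, so $\sigma_a(A)=\Pi^l(A)\cup\sigma_{SBF_+^-}(A)$ with $\sigma_{SBF_+^-}(A)=\sigma_{LD}(A)=\hbox{acc }\sigma_a(A)\cup I^l(A)$; likewise for $B$ (resp. $B^*$). By Theorem \ref{thm18}(a) the operator $A\otimes B$ satisfies property $(gw)$ as soon as the generalized $a$-Weyl spectrum inclusion holds, and unconditionally $A\otimes B$ is polaroid and isoloid (via \cite{DHK} and the proof of Theorem \ref{thm18}), so $E(A\otimes B)=\Pi(A\otimes B)=\Pi^l(A\otimes B)$. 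Combining these with Proposition \ref{pro0}(a) shows that the inclusion $\mathbb{S}_a\subseteq\sigma_{SBF_+^-}(A\otimes B)$ is the relevant condition, and by Proposition \ref{pro19}(ii) it holds automatically away from $0$; hence statement (i) is equivalent to the single implication: if $0\in\mathbb{S}_a$ then $0\in\sigma_{SBF_+^-}(A\otimes B)$.

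Next I would make the point-$0$ bookkeeping explicit. Because $\sigma_{SBF_+^-}(A)\neq\emptyset$, we have $\sigma_a(A)\setminus\{0\}=\sigma_{SBF_+^-}(A)\setminus\{0\}\neq\emptyset$, so in particular $\sigma_a(A)\neq\{0\}$; this is the feature that forces $0$ into $\hbox{acc }\sigma_a(A\otimes B)$ whenever $0\in\hbox{acc }\sigma_a(B)$ or $0\in I^l(B)$, via the product formula $\sigma_a(A\otimes B)=\sigma_a(A)\sigma_a(B)$ (\cite[Theorem 4.4]{I}) together with \cite[Theorem 6]{HK}. So if $0\in\sigma_{SBF_+^-}(B)$, then $0\in\hbox{acc }\sigma(A\otimes B)$, hence $0\notin\Pi(A\otimes B)=\Pi^l(A\otimes B)$, i.e. $0\in\sigma_{SBF_+^-}(A\otimes B)$ — so the implication and statement (i) hold. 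The only way $0$ can enter $\mathbb{S}_a=\sigma_a(A)\sigma_{SBF_+^-}(B)\cup\sigma_{SBF_+^-}(A)\sigma_a(B)$ without $0\in\sigma_{SBF_+^-}(B)$ is via $0\in\sigma_a(A)$ (equivalently, since $A$ is polaroid, $0\in\Pi(A)$, i.e. $A$ is Drazin invertible) together with $0\in\sigma_a(B)$; and if moreover $0\notin\sigma_{SBF_+^-}(B)=\sigma_{LD}(B)$, i.e. $B$ is left Drazin invertible, then a routine computation with Drazin inverses shows $A\otimes B$ is Drazin invertible, so $0\in\Pi^l(A\otimes B)=\Pi(A\otimes B)$ and the implication \emph{fails}. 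This is precisely the content that (i) $\Leftrightarrow$ (ii), rephrasing "$0\notin\Pi^l(A\otimes B)$" and, via $\Pi^l(A\otimes B)=\sigma_a(A\otimes B)\setminus\sigma_{SBF_+^-}(A\otimes B)$ and polaroidness, the dichotomy in (iii).

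Then I would verify the equivalences (i) $\Leftrightarrow$ (ii) $\Leftrightarrow$ (iii) cleanly. For (ii) $\Leftrightarrow$ (iii): since $A\otimes B$ is polaroid, $0\in\Pi(A\otimes B)$ is equivalent to $0\in\iso\sigma(A\otimes B)$ (when $0\in\sigma(A\otimes B)$) or $0\notin\sigma(A\otimes B)$ altogether; the failure $0\notin\Pi(A\otimes B)$ with $0\in\sigma_a(A\otimes B)$ forces $0\in\sigma_a(A\otimes B)\setminus\Pi^l(A\otimes B)=\sigma_{SBF_+^-}(A\otimes B)$, which is exactly "$0\in\sigma_{SBF_+^-}(A\otimes B)$", while $0\notin\sigma_a(A\otimes B)$ is "$A\otimes B$ is bounded below"; so (iii) negates (ii) being false, giving the equivalence. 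For (i) $\Leftrightarrow$ (ii): by the reduction in the first paragraph, (i) holds iff the implication "$0\in\mathbb{S}_a\Rightarrow 0\in\sigma_{SBF_+^-}(A\otimes B)$" holds, and the case analysis of the second paragraph shows this is exactly the condition $0\notin\Pi^l(A\otimes B)=\Pi(A\otimes B)$. Finally, for the last assertion, since $\sigma_{SBF_+^-}(A\otimes B)\setminus\{0\}=\mathbb{S}_a\setminus\{0\}$ always (Proposition \ref{pro19}(ii) and the structural identities), either $0\in\sigma_{SBF_+^-}(A\otimes B)$ — and then, since $0\in\mathbb{S}_a$ whenever $0\in\sigma_a(A)$ or $0\in\sigma_{SBF_+^-}(B)$, we get $\mathbb{S}_a=\sigma_{SBF_+^-}(A\otimes B)$ — or $0\notin\sigma_{SBF_+^-}(A\otimes B)$, in which case the failure of (i)–(iii) means $0\in\mathbb{S}_a$, so $\mathbb{S}_a=\sigma_{SBF_+^-}(A\otimes B)\cup\{0\}$ with $0\notin\sigma_{SBF_+^-}(A\otimes B)$. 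Part (b) is obtained verbatim by replacing $B$ with $B^*$, using $\sigma_a(\ST)=\sigma_a(A)\sigma_a(B^*)$ (\cite[Proposition 4.3(i)]{BDJ}), the polaroidness of $\ST$ (\cite[Lemma 4.7]{BDJ}), $\Pi(B)=\Pi(B^*)$ (\cite[Theorem 2.8]{AS}), and the Drazin-invertibility transfer $B\in DR(B(\Y))\Leftrightarrow B^*\in DR(B(\Y^*))$. The main obstacle I anticipate is the bookkeeping at $0$ in the "mixed" case $0\in\Pi(A)$, $0\in\sigma_a(B)\setminus\sigma_{SBF_+^-}(B)$: one must check carefully that left Drazin invertibility (not merely membership in $\sigma_a(B)$) of $B$ is the exact threshold that makes $A\otimes B$ Drazin invertible, and conversely that $0\in\sigma_{SBF_+^-}(B)$ always pushes $0$ into $\hbox{acc }\sigma(A\otimes B)$ — this hinges on $\sigma_a(A)\neq\{0\}$, which is guaranteed precisely by the hypothesis $\sigma_{SBF_+^-}(A)\neq\emptyset$.
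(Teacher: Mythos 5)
Your overall skeleton is the paper's: reduce everything to the point $0$ by showing $\sigma_{SBF_+^-}(A\otimes B)\setminus\{0\}={\mathbb S}_a\setminus\{0\}$ (via Proposition \ref{pro19}(ii), \cite[Theorem 6]{HK} and $\sigma_{SBF_+^-}=\hbox{\rm acc }\sigma_a\cup I^l$ under property $(gw)$), and your verification of (ii)$\Leftrightarrow$(iii) is fine. The problem is the step you actually invoke to get (i)$\Leftrightarrow$(ii), namely the ``point-$0$ bookkeeping'' of your second paragraph, which is flawed in three concrete ways. First, the identity $\sigma_a(A)\setminus\{0\}=\sigma_{SBF_+^-}(A)\setminus\{0\}$ is false under the present hypotheses: it amounts to $\Pi^l(A)\subseteq\{0\}$, which is the situation of Theorems \ref{thm20} and \ref{thm22}, not of this theorem, where $\Pi^l(A)$ may contain many nonzero points. (The consequence $\sigma_a(A)\neq\{0\}$ is still true, but it needs the isoloid-plus-$(gw)$ argument: $\sigma_a(A)=\{0\}$ forces $\sigma(A)=\{0\}$, hence $0\in E(A)=\Delta^g_a(A)$ and $\sigma_{SBF_+^-}(A)=\emptyset$; and in fact it is not needed at all here.) Second, ``$0\in\sigma_a(A)$, equivalently, since $A$ is polaroid, $0\in\Pi(A)$'' is wrong: polaroid only gives $\hbox{\rm iso }\sigma(A)=\Pi(A)$, and $0$ may lie in $\hbox{\rm acc }\sigma_a(A)\subseteq\sigma_{SBF_+^-}(A)$. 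Third, the ``only way $0$ can enter ${\mathbb S}_a$'' claim misses cases, e.g.\ $0\in\sigma_{SBF_+^-}(A)$ with $0\notin\sigma_{SBF_+^-}(B)$, or $0\in\sigma_a(A)$ with $0\notin\sigma_a(B)$ (then already $0\in\sigma_a(A)\sigma_{SBF_+^-}(B)$ because $\sigma_{SBF_+^-}(B)\neq\emptyset$); so your case analysis does not establish the asserted equivalence of the implication ``$0\in{\mathbb S}_a\Rightarrow 0\in\sigma_{SBF_+^-}(A\otimes B)$'' with ``$0\notin\Pi^l(A\otimes B)$''. (Also, from $0\notin\sigma_{LD}(B)$ you should first note $0\in\Pi^l(B)=\Pi(B)$ by Remark \ref{rem4}, so that $B$ itself is Drazin invertible, before transferring Drazin invertibility to $A\otimes B$.)

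The repair is two lines and is exactly the paper's argument, which needs none of that machinery. Since $\sigma_{SBF_+^-}(A)\neq\emptyset$ and $\sigma_{SBF_+^-}(B)\neq\emptyset$, one has $0\in{\mathbb S}_a$ if and only if $0\in\sigma_a(A)\cup\sigma_a(B)$, i.e.\ if and only if $0\in\sigma_a(A\otimes B)$; and since $A\otimes B$ satisfies $(gw)$, $\Pi^l(A\otimes B)=\sigma_a(A\otimes B)\setminus\sigma_{SBF_+^-}(A\otimes B)$. Hence if $0\notin\Pi^l(A\otimes B)$ and $0\in{\mathbb S}_a$, then $0\in\sigma_a(A\otimes B)\setminus\Pi^l(A\otimes B)=\sigma_{SBF_+^-}(A\otimes B)$, so (i) holds; while if $0\in\Pi^l(A\otimes B)$, then $0\in\sigma_a(A\otimes B)$ gives $0\in{\mathbb S}_a\setminus\sigma_{SBF_+^-}(A\otimes B)$, so (i) fails. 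No appeal to $\hbox{\rm acc }\sigma(A\otimes B)$, to Drazin invertibility of $A\otimes B$, or to $\sigma_a(A)\neq\{0\}$ is required. The same observation fixes your final paragraph, where ``$0\in{\mathbb S}_a$ whenever $0\in\sigma_a(A)$ or $0\in\sigma_{SBF_+^-}(B)$'' omits the case $0\in\sigma_a(B)\setminus\sigma_{SBF_+^-}(B)$ with $0\notin\sigma_a(A)$; the correct statement is $0\in{\mathbb S}_a$ whenever $0\in\sigma_a(A)\cup\sigma_a(B)$, which is what the nonemptiness of both upper B-Weyl spectra buys you.
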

\begin{proof} (a). (i)$\Rightarrow$ (ii). Suppose that $0\in\Pi (A\otimes B)=\Pi^l(A\otimes B)
\subseteq\sigma_a(A\otimes B)$. In particular, $0\in\sigma_a(A)\cup\sigma_a(B)$. However,
since $\sigma_{SBF_+^-}(A)\neq\emptyset$ and  $\sigma_{SBF_+^-}(B)\neq\emptyset$,
$0\in {\mathbb S}_a\setminus\sigma_{SBF_+^-}(A\otimes B)$, which is impossible, for
 ${\mathbb S}_a\subseteq \sigma_{SBF_+^-}(A\otimes B)$.\par 

\noindent (ii)$\Rightarrow$ (iii). Clear.\par

\noindent (iii)$\Rightarrow$ (i). Recall that according to \cite[Theorem 6]{HK}, acc $\sigma_a(A\otimes B)\setminus\{0\}={\mathbb A}_a\setminus\{0\}$.
In addition, since according to Proposition \ref{pro19}(ii), $I^l(A\otimes B)\setminus\{0\}={\mathbb B}_a\setminus\{0\}$,
 $$\sigma_{SBF_+^-}(A\otimes B)\setminus\{0\}= \hbox{ acc }\sigma_a(A\otimes B)\setminus\{0\}\cup I^l(A\otimes B)\setminus\{0\}=  {\mathbb S}_a\setminus\{0\}.$$

\noindent Now well,  if $A\otimes B$ is bounded below, then   $\sigma_{SBF_+^-}(A\otimes B)= {\mathbb S}_a$. On the other hand,
if $0\in  \sigma_{SBF_+^-}(A\otimes B)\subseteq \sigma_a(A\otimes B)$, then $0\in\sigma_a(A)$ or $0\in \sigma_a(B)$. Hence,
since   $\sigma_{SBF_+^-}(A)\neq\emptyset$ and  $\sigma_{SBF_+^-}(B)\neq\emptyset$,
$0\in {\mathbb S}_a$ and then  $\sigma_{SBF_+^-}(A\otimes B)= {\mathbb S}_a$.\par
\indent The last statement can be derived from what has been proved.\par
\noindent (b). Adapt the proof of (a) using ${\mathbb S}'_a$, ${\mathbb A}'_a$ and ${\mathbb B}'_a$ instead of ${\mathbb S}_a$, ${\mathbb A}_a$
and ${\mathbb B}_a$ respectively.
\end{proof}


\markright{ \hskip5truecm \rm Property $(gw)$}

\bigskip
\markright{ \hskip5truecm \rm ENRICO BOASSO and B. P. DUGGAL}

\noindent \normalsize \rm Enrico Boasso\par
  \noindent  E-mail: enrico\_odisseo@yahoo.it \par
\medskip
\noindent B. P. Duggal\par
\noindent E-mail:  bpduggal@yahoo.co.uk

\end{document}